\documentclass[a4paper]{amsart}
\usepackage{amsmath}
\usepackage{amssymb}
\usepackage[arrow, matrix, curve]{xy}

\usepackage{tikz}

\def\to{\rightarrow}

\def\newDC{DC}

\def\DC{\mathcal B}
\def\Bar{\mathbb B}
\def\coBar{\mathbb B^\vee}
\def\OOmega{\mathbf \Omega}
\def\OOmegao{\mathbf \Omega^\circ_r}
\def\AA{\mathbb A}
\def\AAop{\AA^{op}}
\def\ZZ{\mathbb Z}
\def\DsymC{DC}
\def\Ch{Ch}
\def\sigmamod{\Sigma\textrm{-mod}}

\def\isomap{\xrightarrow{\raisebox{-0.7ex}[0ex][0ex]{$\sim$}}}
\def\isomapgauche{\xleftarrow{\raisebox{-0.7ex}[0ex][0ex]{$\sim$}}}
\def\mono{\rightarrowtail}

\newtheorem{theorem}{Theorem}[section]
\newtheorem{lemma}[theorem]{Lemma}
\newtheorem{prop}[theorem]{Proposition}
\newtheorem{cor}[theorem]{Corollary}

\newtheorem{theorem*}{Theorem}
\newtheorem{prop*}[theorem*]{Proposition}

\theoremstyle{definition}

\newtheorem{example}[theorem]{Example}
\newtheorem{notation}[theorem]{Notation}

\theoremstyle{remark}
\newtheorem{remark}[theorem]{Remark}

\numberwithin{equation}{section}


\begin{document}

  \title{Homology of infinity-operads}

\author[E. Hoffbeck]{Eric Hoffbeck}
\address{Université Sorbonne Paris Nord, LAGA, CNRS, UMR 7539, F-93430, Villetaneuse, France}
\email{hoffbeck@math.univ-paris13.fr}

\author[I. Moerdijk]{Ieke Moerdijk}
\address{Department of Mathematics, Utrecht University, PO BOX 80.010, 3508 TA Utrecht, The Netherlands}
\email{i.moerdijk@uu.nl}
 
\subjclass[2010]{18N70, 55N35, 18M70}

\keywords{}

\thanks{}
%
\begin{abstract}
In a first part of this paper, we introduce a homology theory for infinity-operads and for dendroidal spaces which extends the usual homology of differential graded operads defined in terms of the bar construction, and we prove some of its basic properties. In a second part, we define general bar and cobar constructions. These constructions send infinity-operads to infinity-cooperads and vice versa, and define an adjoint bar-cobar (or "Koszul") duality. Somewhat surprisingly, this duality is shown to hold much more generally between arbitrary presheaves and copresheaves on the category of trees defining infinity-operads. We emphasize that our methods are completely elementary and explicit.
\end{abstract}

\maketitle
 

%

\section*{Introduction}

The goal of this paper is to introduce a homology theory for infinity-operads and dendroidal spaces, and prove some of its fundamental properties. In particular, we will prove a bar-cobar (sometimes called “Koszul”) duality in a general context. Our homology extends the classical homology of differential graded operads, first introduced by Ginzburg-Kapranov \cite{GK} in terms of the bar construction on differential graded operads.

More specifically, we will consider a category $\AA$ of trees, closely related to the category $\OOmega$  \cite{MW1} used to model infinity-operads, but modified to deal with  operads which are trivial in arities zero and one.  We introduce a homology theory for presheaves of chain complexes on $\AA$, which takes values in symmetric sequences of graded abelian groups. This theory enjoys the standard properties of a homology theory such as the invariance under quasi-isomorphism and the existence of long exact sequences and spectral sequences. Furthermore, in the case where the presheaf arises as the nerve of an operad, our theory agrees with the classical bar homology of the operad. For example, in the specific case of the associative operad, our complex calculating the homology can directly be related to the dual of the complex given by the Stasheff polytopes, while for the commutative operad, it is the homology of the partition complex, as in the paper of Fresse \cite{F}. In fact, if the presheaf comes from a general operad in $Sets$ (as in the case of the nerves of the associative and commutative operads), our homology is closely related to the generalised partition complexes of Vallette \cite{V}. If the presheaf arises from the restriction of a dendroidal space to the smaller category $\AA$ of trees, our homology theory is an invariant of the Quillen model structure defining infinity-operads, in the sense that it can be viewed as a left Quillen functor on this model category.

We next prove a duality theorem for arbitrary presheaves on the category $\AA$. In order to do so, we define for a presheaf $M$ on the category $\AA$ of trees a copresheaf $\Bar (M)$, which extends the complex calculating the homology of $M$ mentioned above. We also define a dual cobar construction $\coBar$ which assigns a presheaf to any copresheaf. The duality result then states for any presheaf $M$  that $\coBar (\Bar (M))$ is quasi-isomorphic to $M$ (as presheaves, i.e. naturally in $\AA$). We emphasize that this result holds for arbitrary presheaves $M$, not just for nerves of operads. In this sense, it reflects a property of the category $\AA$. Our proof of duality is completely explicit, elementary and  short. Nonetheless, in the strict case where $M$ is the nerve of an actual operad, we recover the duality result first proved by Getzler-Jones \cite{GJ} and by Ginzburg-Kapranov \cite {GK}, see also  \cite{KS}. Extending what happens in these references, our construction yields an infinity-cooperad $\Bar (M)$ in case the presheaf $M$ has the structure of an infinity-operad, (and dually, the construction $\coBar$ maps infinity cooperads back to infinity-operads.) The relevant notion of cooperad arising naturally here is closely related to that occurring in the papers of Ching \cite{C, C2} and the preprint of Fresse-Guerra \cite{FG}, for example. 

A duality theorem for operads in spectra has been proved earlier by Ching, using the same category $\AA$ of trees. Although Ching works with strict operads and his proof in terms of the Boardman-Vogt resolution is different, it is likely that his methods extend to infinity operads in spectra, and we expect there to be a common generalisation of his results and ours.

We mentioned that our homology can be seen as defining a homology theory for dendroidal sets or spaces, behaving well with respect to the model structure for simplicial or topological infinity operads (see \cite{CM}). In this context, our homology theory should be distinguished from the one defined by Basic and Nikolaus \cite{BN}. Their homology does not extend the homology of operads but extends the usual one of simplicial sets, and in fact is an invariant of their stable model structure modeling connective spectra (see Nikolaus \cite{N}). A concrete difference is that for representable dendroidal sets (or free operads generated by trees) our homology counts the vertices of a tree (see Corollary~\ref{corRepre} below), while theirs counts the leaves.

\textbf{Acknowledgements:} We began the work on this paper when the second author was visiting the Université Paris 13 as "professeur invité" in 2017, and he is grateful to this institution. He would also like to thank Gijs Heuts for helpful discussion. The first author acknowledges support by ANR ChroK (ANR-16-CE40-0003).

%

\section{Definitions}\label{S:Def}

Recall the category $\OOmega$ of rooted trees from \cite{MW1}, parametrising the category $dSets$ of dendroidal sets. This category $\OOmega$ contains many subcategories which are relevant in different contexts. Here we consider two such, viz. the full subcategory $\OOmegao$ of open and reduced trees  and a non-full subcategory $\AA \subset \OOmegao$.
This category $\OOmegao$ has as its objects all trees $T$ in $\OOmega$ all of whose vertices have valence (i.e. number of inputs) at least $2$. The category $\AA$ has the same objects. The morphisms $S \to T$ in $\AA$ are those morphisms in $\OOmega$ which preserve the root and induce a bijection between the sets of leaves. Any such morphism factors as an isomorphism $S \isomap S'$ followed by an inner face map $S' \mono T$, i.e. an ``inclusion'' of a tree $S'$ obtained from $T$ by contracting some inner edges. Note that the category $\AA$  falls apart into disjoint pieces $\AA^{(\ell)}$ of trees with $\ell$ leaves, where the corolla $C_\ell$ is an initial object. For a tree $T$, we denote by $E(T)$ the set of its \textit{inner} edges and by $Vert(T)$ the set of its vertices.
Note that for $T$ with at least one vertex, 
$$|E(T)| \leq \ell(T) -2,$$
where $\ell(T)$ is the number of leaves of $T$.

We will be interested in various categories of presheaves on $\AA$ and $\OOmegao$.

\begin{example}
 (1) Let $P$ be an operad with values in a cocomplete symmetric monoidal category $\mathcal M$. We will mostly be interested in the case where $\mathcal M$ is the category of modules over a commutative ring $R$, or the category $\Ch$ of chain complexes of such (such operads will be called \emph{linear operads}), or the category of simplicial sets, and the reader can keep one of these cases in mind for definiteness.
  The usual dendroidal nerve $N(P)$ defines a presheaf on $\AA$ with values in $\mathcal M$.
  Its value on a tree $T$ can loosely be described as the tensor product 
  $$N(P)(T) = \bigotimes_v P(|v|),$$
  where this tensor product ranges over the vertices $v$ of $T$ and $|v|$ is the number of input edges of $v$ in $T$.
  A more precise definition requires either a definition of operads as taking values $P(U)$ on finite sets $U$ or a quotient of the form 
  $$N(P)(T) = \bigg( \coprod_\pi \bigotimes_v P(|v|)\bigg)  / \sim,$$
  where the sum is over planar structures $\pi$ on $T$, and $|v|$ is now the ordered set of inputs of $v$ linearly ordered by $\pi$, while the quotient is by the group of (non-planar) automorphisms of $T$. We refer to \cite{HM} for details.
  
  Our restriction to the subcategory $\AA \subset \OOmega$ implies that $N(P)$ ignores the objects $P(0)$ and $P(1)$, so this notion is only appropriate for operads which are reduced, that is where $P(0)$ is empty or zero and $P(1)$ consists of the identity operation only.
  In the linear case, $P(1)=R$. This restriction is not essential, as one can always replace the ground ring $R$ by $P(1)$, as in \cite{GK}. 
  
  (2) Recall that a \emph{dendroidal set} (or \emph{dendroidal space}) is a presheaf on the category $\OOmega$ with values in $Sets$ (or in $sSets$, respectively).
Its restriction to a presheaf on $\AA$ or on $\OOmegao$ will still be referred to as a dendroidal set (or space).
 For an open and reduced dendroidal space $X: (\OOmegao)^{op} \to sSets$, we obtain a functor $\ZZ[X]: (\OOmegao)^{op} \to \Ch$ by taking the usual chain complex $\ZZ[X(T)]$ of each simplicial set $X[T]$. This functor can be restricted to the smaller category $\AA \subset \OOmegao$.

 \end{example}

For two trees $S$ and $R$, and a leaf $a$ of $S$, we will write  $S \circ_a R$ for the tree obtained by grafting $R$ on top of $S$ at the leaf $a$.
Thus, the inner edges of $S \circ _a R$ are those of $S$, those of $R$ and $a$.
A functor $M : \AA \to \Ch$ is called a \textit{linear $\infty$-preoperad} if $M$ comes equipped with structure maps
$$\theta = \theta_{S,R,a} : M( S \circ _a R) \to M(S) \otimes M(R)$$
which are natural in $S$ and $R$ (in the sense that for $\beta : S \to S'$ and $\gamma : R \to R'$ in $\AA$, the diagram
\begin{equation*}
\xymatrix@M=8pt{
M(S \circ_a R)\ar[r]^\theta & M(S)\otimes M(R) \\ 
M(S' \circ_{a'} R')\ar[u]\ar[r]^\theta & M(S')\otimes M(R')\ar[u]}
\end{equation*}  
commutes, where $a'=\beta a$), and satisfy the natural associativity axioms (see \cite{LV}).
The functor $M$ is called a \textit{linear $\infty$-operad} if these maps $\theta$ are moreover quasi-isomorphisms.

\begin{example}
(1) If $P$ is a linear operad, then for the nerve of $P$, the maps $\theta$  are isomorphisms. \\
(2) If $X : \AAop \to sSets$ is (the restriction to $\AA$ of) a dendroidal Segal space (see Section~\ref{S:DHom}), then $\ZZ[X]$ is a linear $\infty$-operad.
\end{example}

\bigskip

Similarly, a functor $Y : \AA \to \Ch$ is called a \textit{linear $\infty$-precooperad} if it comes equipped with maps
$$\Delta = \Delta_{S,R,a} : Y(S \circ_a R) \to Y(S) \otimes Y(R)$$
which are natural in $S$ and $R$ and satisfy coassociativity axioms.
The functor $Y$ is then called a \textit{linear $\infty$-cooperad}  if these maps $\Delta$ are moreover quasi-isomorphisms.

%

%

\section{Dendroidal homology}\label{S:DC}

We want to define and study a notion of dendroidal homology, which takes values in symmetric sequences of abelian groups.
Let us first recall the category 
$$\sigmamod = \prod_{ \ell \geq 2} \Ch_{\ZZ[\Sigma_\ell]}$$
of symmetric sequences of chain complexes. 
An object $A$ of $\sigmamod$ is a family $\{A^{(\ell)}\}_{\ell \geq 2}$ where each $A^{(\ell)}$ is a nonnegative chain complex of $\Sigma_\ell$-modules. 
We define
$$\DsymC : \Ch^{\AA^{op}} \to \sigmamod$$
to be the functor assigning to a presheaf $M$ on $\AA$ the symmetric sequence of complexes whose $\ell$-th component in degree $p\geq-1$ is
$$\DsymC_p(M)^{(\ell)} = \Bigg( \bigoplus_{T, \alpha, e} M(T) \Bigg)_{coinv}$$
where
\begin{itemize}
\item $T$ ranges over trees with exactly $p+1$ inner edges and at least one vertex
\item $\alpha : C_\ell \mono T$ is a morphism in $\AA$
\item $e=(e_0, \ldots, e_p)$ is an enumeration of the inner edges of $T$
\end{itemize}
and
the coinvariants are taken for the action of the groupoid $H$ given by the semi-direct product of
the groupoid of extensions $C_\ell \mono T$ and isomorphisms between them 
\begin{equation*}
\xymatrix@M=10pt{
C_\ell \ar@{>->}[dr]_{\alpha'} \ar@{>->}[r]^{\alpha} & T \ar[d]_\theta^{\wr} \\
 & T' }
\end{equation*}  
and the symmetric group $\Sigma_{[p]}=\Sigma_{\{0, \ldots, p\}}$ (permuting the $e_i$'s and acting by the sign representation).
Notice that the morphism $\alpha$ is the same as an enumeration of the leaves of $T$ and commutativity of the diagram above corresponds to $\theta$ respecting this enumeration.
We write elements as (represented by) quadruples
$$(T,\alpha,e,x)$$
where $e$ is as above and $x \in M(T)$.
As we take the coinvariants with respect to the $H$-action, we make the following two identifications: First,
$$(T,\alpha, e, \theta^* x')=(T',\theta \alpha,\theta e, x')$$
for $x' \in M(T')$, $\theta : T \isomap T'$ and $\theta e$ the induced enumeration of the inner edges of $T'$. And secondly
$$(T,\alpha,e,x)=(-1)^\tau (T,\alpha,e\tau,x)$$
for any permutation $\tau \in \Sigma_{[p]}$ and the composition $[p]\stackrel{\tau}{\to} [p] \stackrel{e}{\to} E(T)$.
The $\Sigma_\ell$-action is given by precomposition, identifying $\Sigma_\ell$ with the automorphisms of $C_\ell$.

The simplicial face map $\partial_i : \newDC_p(M) \to \newDC_{p-1}(M)$ maps the summand $M(T)$ for $(T,\alpha,e)$ to $M(\partial_{e_i}T)$ for $(\partial_{e_i}T, \partial_{e_i}^* (\alpha), (e_0, \ldots, \hat{e_i}, \ldots, e_p))$, where the map $\partial_{e_i}: \partial_{e_i}(T) \mono T$ is the face contracting $e_i$, the map $\partial_{e_i}^* (\alpha): C_\ell \mono \partial_{e_i}(T)$ is the unique morphism in $\AA$ for which $\alpha= \partial_{e_i} \circ \partial_{e_i}^* (\alpha)$,  and the map $M(T) \to M(\partial_{e_i}T)$ is the restriction $\partial_{e_i}^*$ given by the presheaf structure on $M$. 

Let us check that these face maps are well-defined on the quotient.
First, we have the following commutative square which shows that the action by the groupoid of tree isomorphisms is compatible with the differential:
\begin{equation*}
\xymatrix@M=10pt{
T'  \ar@{->}_\theta^\sim[r] & T \\
\partial_{e_i}T' \ar@{>->}[u]^{\partial(e_i)} \ar@{->}^\sim[r] & \partial_{\theta(e_i)}T. \ar@{>->}[u]_{\partial_{\theta(e_i)}} }
\end{equation*}  
Next, let us show the compatibility of the $\Sigma_{[p]}$-action. 
For a given tree $T$  and a map $\alpha: C_\ell \to T$, let us write $M(T)_e$ for the copy of $M(T)$ given by $T$ and the enumeration $e=(e_0, \ldots, e_p)$. 
Then for a given $e$ and $i=0, \ldots, p$, and any $\tau \in \Sigma_{[p]}$, there is a commutative square
\begin{equation*}
\xymatrix@M=10pt{
M(T)_e   \ar@{->}^{(-1)^\tau}[r] \ar@{->}[d]_{(-1)^j \partial_{e_j}^*} 
& M(T)_{e \tau} \ar@{->}[d]^{(-1)^i \partial_{(e\tau)_i}^*} \\
 M(\partial_{e_j}T) \ar@{->}_{(-1)^{\tau'}}[r] 
& M( \partial_{(e\tau )_i}T)  }
\end{equation*}  
where $j =\tau i$, and $\tau'$ is the restriction of  $\tau$ 
(so that  $(-1)^{\tau'}=(-1)^{i+j}(-1)^\tau$ for $i$ and $j$ as above).

The simplicial identities are clearly satisfied.
Moreover, 
the complex is naturally augmented by 
 $$\newDC_{-1}(M) = \bigoplus_{\ell \geq 2} M(C_\ell)$$
 where $C_\ell$ is the $\ell$-corolla. 
Since the simplicial operators clearly commute with the differential of $M$, this proves that $\newDC(M)$ is an augmented semi-simplicial object in $\sigmamod$.
One obtains a double complex 
$$\bigoplus_{p,q}\newDC_p(M_q)$$
where $p \geq -1$ and $q \geq 0$,
by taking as horizontal differential the alternating sum of the face maps
$$\partial=(-1)^i \partial_{e_i}:\newDC_p(M) \to \newDC_{p-1}(M)$$
and vertical differential $\partial_{int}$ induced by the one of $M$,
$$\partial_{int}(T,\alpha,e,x)=(-1)^{p+1}(T,\alpha,e,\partial_M(x)).$$
The homology of the total complex will be denoted 
$$DH_*(M)$$
(with $* \geq -1$), and called the \emph{dendroidal homology} of $M$.

\section{Examples and basic properties}

In this section we relate our homology theory to the usual homology of operads and to the homology of categories.

\begin{example}
 Let $P$ be a linear (reduced) operad. We have seen that its nerve $N(P)$ is a presheaf on $\AA$ with values in $\Ch$.
 The homology of $P$ (as defined for instance by Ginzburg-Kapranov \cite{GK}) is
 the homology of its bar construction $B(P)=(T^c(s\bar P), \partial)$, the quasi-cofree cooperad on the suspension of the augmentation ideal of $P$, with a differential which contracts one edge at a time. Up to a degree shift, the underlying symmetric sequence (in arities larger than $1$) is nothing but our complex defined above.
Thus the homology of $P$ is (up to a shift) the dendroidal homology of $N(P)$.
\end{example}

\begin{example}
As a particular case of the above remark, for $P=Ass$ the associative operad, 
our complex allows us to directly compute the dendroidal homology of $N(Ass)$ and thus recover the homology of $Ass$.
The complex $DC_*(\ZZ[NAss])$ decomposes as a sum over $\ell \geq 2$, the number of leaves. For a fixed $\ell$ and in degree $p$, it is the free abelian group on planar trees with $\ell$ leaves and $p+1$ inner edges ($p \geq -1$) equipped with an enumeration of the leaves. For a fixed enumeration, this complex is dual to that of the cellular homology of the Stasheff polytope, which is a contractible CW-complex of dimension $\ell-2$. So normally the homology would be $\ZZ[\Sigma_\ell]$ in degree $\ell-2$ and $0$ elsewhere. But our grading is shifted by $-1$, so the homology of the summand $DC_*^{(\ell)}(\ZZ[NAss])$ is $\ZZ[\Sigma_\ell]$ in degree $p = \ell-3$ and zero elsewhere. 

Similarly, for $P=Com$ the commutative operad, our complex allows us a direct computation, see also Remark~\ref{RemOpPartPoset} below.
\end{example}

\begin{prop}\label{propbasicprop}
Dendroidal homology has the following basic properties.\\
 (i) If $M \to M'$ is a quasi-isomorphism, then the induced map $DH_*(M) \to DH_*(M')$ is an isomorphism.\\
  (ii) For a filtered colimit $M=\displaystyle{\lim_\alpha M_\alpha}$, the canonical map 
  $\displaystyle{\lim_\alpha DH_*(M_\alpha) \to DH_*(M)}$ is an isomorphism.\\
 (iii) For any short exact sequence $0 \to M' \to M \to M'' \to 0$, 
 there is a long exact sequence
 $$ \ldots \to DH_n(M') \to DH_n(M) \to DH_n(M'') \to DH_{n-1}(M') \to \ldots$$
 (iv) For any presheaf $M$ on $\AA$ with values in $Ch$, its homology $DH_*(M)$ decomposes according to the number $\ell$ of leaves in a tree, as
  $$DH_*(M)=\bigoplus_{\ell\geq2} DH_*^{(\ell)}(M).$$
  This decomposition is functorial in $M$.\\
 \end{prop}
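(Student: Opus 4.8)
The plan is to reduce all four statements to a single structural observation: for each $p \geq -1$, the functor $M \mapsto \newDC_p(M)$ is, non-canonically, a direct sum of evaluation functors $M \mapsto M(T)$. The point is that the groupoid $H$ and the symmetric group $\Sigma_{[p]}$ together act on the set of triples $(T,\alpha,e)$ indexing the sum in the definition of $\newDC_p(M)^{(\ell)}$, and this action is \emph{free}. Indeed, if $\theta\colon T \to T$ is an isomorphism with $\theta\alpha = \alpha$, then $\theta$ fixes every leaf of $T$, hence fixes every edge of $T$ (each edge lies below some leaf $\lambda$, hence on the unique root-to-$\lambda$ path, which $\theta$ carries to itself while preserving the distance to the root), hence $\theta = \mathrm{id}$; so any stabilizing pair has trivial tree part, and its $\Sigma_{[p]}$-part $\tau$ must then satisfy $e\tau = e$, forcing $\tau = \mathrm{id}$ since $e$ is a bijection onto $E(T)$. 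Consequently, choosing a set $\{T_\lambda\}$ of orbit representatives gives a natural isomorphism $\newDC_p(M) \cong \bigoplus_\lambda M(T_\lambda)$ (the $\Sigma_{[p]}$-signs being absorbed into the choice of representatives). In particular $\newDC_p(-)$ is exact, preserves objectwise quasi-isomorphisms, and commutes with all colimits, for every $p$.

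Statement (iv) is then essentially built into the construction: $\AA$ is the disjoint union of its subcategories $\AA^{(\ell)}$ of trees with $\ell$ leaves, every morphism of $\AA$ --- in particular every face map $\partial_{e_i}$, which merely contracts an inner edge --- preserves the number of leaves, and a morphism $\alpha\colon C_\ell \mono T$ exists only when $\ell = \ell(T)$. Hence the whole double complex $\bigoplus_{p,q}\newDC_p(M_q)$, together with all of its differentials, splits as the direct sum over $\ell \geq 2$ of the sub-double-complexes $\bigoplus_{p,q}\newDC_p(M_q)^{(\ell)}$, and passing to homology of the total complexes gives $DH_*(M) = \bigoplus_{\ell \geq 2} DH_*^{(\ell)}(M)$, naturally in $M$. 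For a fixed $\ell$ this double complex lives in the range $-1 \leq p \leq \ell - 3$ (using $|E(T)| \leq \ell(T)-2$) and $q \geq 0$; at any rate, in each total degree $n$ the group $\mathrm{Tot}_n = \bigoplus_{p+q = n,\, p\geq -1,\, q\geq 0}\newDC_p(M_q)$ is a finite direct sum, so the column filtration of the double complex is bounded in each degree and its spectral sequence converges.

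For (i): interpreting a quasi-isomorphism $M \to M'$ of presheaves as a map inducing a quasi-isomorphism $M(T) \to M'(T)$ for every tree $T$, the structural observation together with the fact that homology commutes with direct sums shows that $\newDC_p(M) \to \newDC_p(M')$ is a quasi-isomorphism on each column of the double complex. A map of double complexes that is a columnwise quasi-isomorphism induces a quasi-isomorphism on total complexes: in the convergent spectral sequence of the column filtration, $E^1_{p,q}$ is the vertical homology $H_q(\newDC_p(M))$, on which the map is an isomorphism, so it is an isomorphism on $E^\infty$, hence on $H_*(\mathrm{Tot})$. Thus $DH_*(M) \to DH_*(M')$ is an isomorphism.

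For (ii): $\newDC_p(-)$ commutes with filtered colimits, being a direct sum of evaluation functors and colimits of presheaves being computed objectwise; each $\mathrm{Tot}_n$ is a direct sum of such functors and hence also commutes with filtered colimits; and homology of abelian groups commutes with filtered colimits --- so $\varinjlim_\alpha DH_*(M_\alpha) \to DH_*(M)$ is an isomorphism. For (iii): since $\newDC_p(-)$ is exact, a short exact sequence $0 \to M' \to M \to M'' \to 0$ gives short exact sequences of columns, hence a short exact sequence of double complexes, hence --- degreewise, $\mathrm{Tot}_n$ being a direct sum of short exact sequences --- a short exact sequence $0 \to \mathrm{Tot}(\newDC(M')) \to \mathrm{Tot}(\newDC(M)) \to \mathrm{Tot}(\newDC(M'')) \to 0$, whose long exact homology sequence is the asserted one. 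The only genuine point in all of this is the freeness used in the first paragraph --- the rigidity of trees under isomorphisms fixing their leaves --- since it is precisely this that makes $\newDC_p(-)$ exact and quasi-isomorphism-invariant over $\ZZ$ rather than merely right exact; granting it, (i)--(iv) become standard homological algebra of half-plane double complexes.
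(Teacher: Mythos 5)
Your proposal is correct and follows essentially the same route as the paper: columnwise arguments on the double complex $\bigoplus_{p,q}\newDC_p(M_q)$ (columnwise quasi-isomorphism for (i), objectwise colimits for (ii), a short exact sequence of double complexes for (iii), and the evident splitting by number of leaves for (iv)), with convergence handled by the bounded strip $-1 \leq p \leq \ell-3$ exactly as in the paper's subsequent remark. The one point you spell out that the paper leaves implicit (deferring it to its appendix on ``simple'' groupoids) is the rigidity argument showing the $H \rtimes \Sigma_{[p]}$-action on triples $(T,\alpha,e)$ is free, so that $\newDC_p(-)$ is a sum of evaluation functors and hence exact and quasi-isomorphism-invariant over $\ZZ$; this is a correct and welcome justification of the step the paper takes for granted.
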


 \begin{proof}
(i) First, for each $p \geq -1$, the map $\newDC_p(M_\bullet) \to \newDC_p(M'_\bullet)$ is a quasi-isomorphism, and this gives a quasi-isomorphism of double complexes and hence a quasi-isomorphism between the total complexes. \\
(ii)  Obvious. \\
  (iii) This follows from the fact that such a short exact sequence induces a short exact sequence of double complexes
  $$0 \to \newDC_p(M'_q) \to \newDC_p(M_q) \to \newDC_p(M''_q) \to 0.$$
  (iv) The double complex $\newDC_*(M)$ already decomposes according to the number of leaves, and the differentials respect this decomposition.
\end{proof}

\begin{remark}
Let us observe that the definition of $DH_*(M)$ as the homology of the total complex of a double complex (bigraded by $p \geq -1$ and $q \geq 0$) gives two spectral sequences which we record for completeness:
 For $M$ a presheaf on $\AA$ with values in $\Ch$, there are natural spectral sequences 
 $$E^2_{p,q} = DH_p(H_q(M)) \Rightarrow DH_{p+q}(M)$$ 
 and
 $$E^2_{p,q}=H_q(DH_p(M)) \Rightarrow DH_{p+q}(M).$$
 For the convergence of these, even if $M$ is unbounded, note that for a fixed number of leaves $\ell$, the spectral sequences live in a strip $ -1 \leq p \leq \ell-3$.
\end{remark}

\begin{prop}\label{PropRepre}
For a representable functor $\AA(-,R)$ given by a tree $R$,
 $$DH_p^{(\ell)}(\ZZ \AA(-,R))= 
\left\{\begin{array}{ll}
\ZZ[\Sigma_\ell] & \text{if } p=-1 \text{ and } R \simeq C_\ell \\ 
0 & \text{otherwise.}\\
\end{array}\right. $$
 \end{prop}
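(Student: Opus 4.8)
The plan is to compute $DH_*^{(\ell)}(\ZZ\AA(-,R))$ directly from the double complex, exploiting the fact that $\AA(-,R)$ is concentrated in a single simplicial degree $q=0$, so that $DH_*$ reduces to the homology of the single semi-simplicial chain complex $\newDC_*(\ZZ\AA(-,R))$. The key observation is that a quadruple $(T,\alpha,e,x)$ with $x \in \ZZ\AA(T,R)$ is a $\ZZ$-linear combination of generators where $x$ is a single map $\phi : T \to R$ in $\AA$; since $\phi$ induces a bijection on leaves, the composite $\phi\alpha : C_\ell \mono R$ is forced, so $R$ must lie in $\AA^{(\ell)}$, and otherwise the summand $DC_*^{(\ell)}$ vanishes. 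Thus from now on I would fix $R$ with $\ell$ leaves and a map $\phi : T \to R$; such $\phi$ factors as an isomorphism $T \isomap T'$ followed by an inner face $T' \mono R$, i.e. $\phi$ is recorded by a subset $F \subseteq E(R)$ of inner edges of $R$ (the ones \emph{not} contracted) together with the tree-isomorphism data, which the $H$-coinvariants precisely quotient out.

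After taking $H$-coinvariants, I expect $\newDC_p^{(\ell)}(\ZZ\AA(-,R))$ to be identified with the free abelian group on pairs $(F, e)$ where $F \subseteq E(R)$ has cardinality $p+1$ and $e$ is an enumeration of $F$, modulo the sign action of $\Sigma_{[p]}$; equivalently, after choosing a reference enumeration of $E(R)$, this is the degree-$(p+1)$ part of the exterior algebra $\Lambda^\bullet(\ZZ^{E(R)})$ (with a degree shift), where the face map $\partial_i$ contracts the $i$-th edge of $F$, i.e. deletes it from $F$. Concretely the complex $\newDC_{*}^{(\ell)}(\ZZ\AA(-,R))$ with its differential $\partial = \sum (-1)^i \partial_{e_i}$ is exactly the (shifted, augmented) simplicial chain complex computing the reduced homology of a simplex on the vertex set $E(R)$ — the full augmented chain complex of $\Lambda^\bullet \ZZ^{E(R)}$ — which is acyclic whenever $E(R) \neq \varnothing$, and is $\ZZ$ concentrated in the bottom degree $p=-1$ (coming from $F = \varnothing$, i.e. $T = R$ is a corolla) when $E(R) = \varnothing$. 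The case $E(R) = \varnothing$ forces $R$ to have no inner edges, hence $R \simeq C_\ell$, and then $\newDC_{-1}^{(\ell)} = \ZZ\AA(C_\ell, C_\ell) = \ZZ[\Sigma_\ell]$ with all higher terms zero, giving exactly the stated answer; when $R$ is not a corolla, $E(R) \neq \varnothing$ and the complex is acyclic, so $DH_*^{(\ell)} = 0$.

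The main obstacle I anticipate is bookkeeping the $H$-coinvariants carefully: I must check that after quotienting by tree-isomorphisms the set of generators in degree $p$ is genuinely in bijection with $(p+1)$-element subsets of $E(R)$ (and not something larger, because a priori different $(T,\alpha,\phi)$ could map to the same subset $F$ only via a \emph{unique} isomorphism, which is what makes $H$ act freely enough), and that the $\Sigma_{[p]}$ sign action matches the Koszul sign in the exterior algebra and that the face maps $\partial_{e_i}$ translate to the simplicial boundary on subsets. Once this dictionary is in place, acyclicity of the augmented chain complex of a simplex (equivalently, the contractibility of $\Lambda^\bullet$ of a free module of positive rank, or a direct contracting homotopy "cap with a fixed edge $e_0 \in E(R)$") finishes the argument. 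The $\Sigma_\ell$-equivariance in the corolla case is immediate since $\Sigma_\ell = \mathrm{Aut}(C_\ell)$ acts by precomposition on $\AA(C_\ell,C_\ell)$, making it the regular representation $\ZZ[\Sigma_\ell]$.
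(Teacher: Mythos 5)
Your overall strategy coincides with the paper's: work in simplicial degree zero, identify $\newDC_*^{(\ell)}(\ZZ\AA(-,R))$ with an augmented chain complex of a simplex on the vertex set $E(R)$, and conclude by acyclicity (a contracting homotopy capping with a fixed edge) when $E(R)\neq\emptyset$, with the corolla case giving $\ZZ[\Sigma_\ell]$ in degree $-1$. However, the dictionary you defer as ``bookkeeping'' is false as stated, and checking it is where the actual content lies. The $H$-coinvariants only quotient by isomorphisms $\theta:T\isomap T'$ \emph{under} $C_\ell$, i.e.\ compatible with $\alpha$; they let you normalize $\phi:T\to R$ to the inner face determined by a subset $F\subseteq E(R)$, but they do not discard $\alpha$. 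What survives, besides $(F,e)$, is the composite $\gamma=\phi\alpha:C_\ell\mono R$, i.e.\ one of the $\ell!$ morphisms $C_\ell\mono R$ (equivalently, leaf-enumerations of $R$). So in degree $p$ the coinvariants are free on triples $(\gamma,F,e)$ modulo signs, not on pairs $(F,e)$: the complex is a direct sum, indexed by $\gamma$, of copies of your simplex complex. Your own degree $-1$ computation already exhibits the discrepancy: your identification would give a single $\ZZ$ (the empty $F$) in degree $-1$, whereas the term is $\ZZ\AA(C_\ell,R)\cong\ZZ^{\ell!}$ --- and it is exactly this factor that produces $\ZZ[\Sigma_\ell]$, rather than $\ZZ$, when $R$ is a corolla.

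The repair is immediate and is how the paper organizes the argument: the composite $\gamma=\phi\alpha$ is unchanged by the face maps $\partial_i$ and by the $H$- and sign-identifications, so the complex splits as a direct sum over morphisms $\gamma:C_\ell\mono R$, and each summand is the augmented simplicial chain complex on $E(R)$ (after choosing a linear order on $E(R)$), which your homotopy kills whenever $E(R)\neq\emptyset$; a direct sum of acyclic complexes is acyclic, so the non-corolla case still gives $0$, and the corolla case gives $\bigoplus_{C_\ell\isomap R}\ZZ=\ZZ[\Sigma_\ell]$ with the regular $\Sigma_\ell$-action by precomposition. With this splitting inserted, the rest of your argument goes through essentially verbatim.
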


\begin{proof}
 The complex in degree $p$ is the free abelian group on equivalence classes of
 $$ C_\ell  \stackrel{\alpha}\mono  T  \stackrel{\beta}\mono  R, \ (e_0, \ldots, e_p)$$
 where $e_0, \ldots, e_p$ enumerate inner edges of $T$
 and the equivalence relation is for permutations of the $e_i$'s (with a sign) as well as isomorphisms given by commutative diagrams
\begin{equation*}
\xymatrix@M=8pt
{
C_\ell \ar@{>->}[r]^\alpha \ar@{>->}[dr]_{\alpha'} & 
T \ar@{>->}[r]^\beta \ar[d]^{\wr}_\theta&
R \\
 & T'\ar@{>->}[ur]_{\beta'} & 
}
\end{equation*}  
all as above. The differential is $\displaystyle{\sum_{i=0}^p(-1)^i\partial_i}$
where
$$\partial_i(C_\ell  \stackrel{\alpha}\mono  T  \stackrel{\beta}\mono  R, \ (e_0, \ldots, e_p)) = (C_\ell \mono  \partial_{e_i}T  \mono  R, \ (e_0, \ldots, \hat{e_i}, \ldots, e_p)
)$$
with the induced maps.
The composition $\beta \alpha$ remains the same over equivalence classes and under $\partial_i$, thus the complex falls apart into a direct sum over morphisms
$\gamma: C_\ell \mono R$. 
For a fixed $\gamma$, choose a linear ordering of the inner edges of $R$. Then the summand for $\gamma$ can be identified with the complex which in degree $p$ is the free abelian group on the set of sequences
$$e_0 < \ldots < e_p \ \ (p \geq -1) $$
of such edges in the chosen order, with usual differential
$$\partial(e_0, \ldots, e_p)=\sum(-1)^i (e_0, \ldots, \hat{e_i}, \ldots, e_p).$$
If $R$ has at least one inner edge, the complex is acyclic: we can choose a maximal element in the order, call it $m$.
Then the map $h$ defined by
$$h(e_0, \ldots, e_p) =
\left\{\begin{array}{ll}
0 & \text{if } e_p=m \\ 
(-1)^{p+1}(e_0, \ldots e_p, m) & \text{if } e_p <m.\\
\end{array}\right.
$$
is a contracting homotopy, so the homology vanishes.
If $R$ is a corolla, on the other hand, the complex is concentrated in degree $-1$, and is just $\ZZ$. So
$$DH_{-1}(\ZZ \AA(-,R))=\bigoplus_{C_\ell \isomap R} \ZZ,$$
which is $\ZZ[\Sigma_\ell]$ when $R$ is a corolla with $\ell$ leaves and $0$ otherwise.
\end{proof}

\begin{cor}\label{corRepre}
For a representable functor $\OOmegao(-,T)$ given by a tree $T$, 
 $$DH_p(\ZZ \OOmegao(-,T))= 
\left\{\begin{array}{ll}
\displaystyle{ \bigoplus_{v \in Vert(T)} \ZZ[\Sigma_{inputs(v)}] }& \text{if } p=-1 \\ 
0 & \text{otherwise.}\\
\end{array}\right.
 $$
  \end{cor}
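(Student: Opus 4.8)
The plan is to reduce the statement to Proposition~\ref{PropRepre}, by decomposing the restriction to $\AA$ of the representable $\ZZ\OOmegao(-,T)$ as a direct sum of representables on $\AA$.

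The key step is a factorization of morphisms into $T$. Since $\OOmegao$ is full in $\OOmega$ and contains no degeneracies, every morphism $f\colon S\to T$ of $\OOmegao$ with $S$ in $\AA$ is a composite of inner and outer face maps and an isomorphism, and it factors uniquely as
$$ S\xrightarrow{\,g\,}T_f\hookrightarrow T, $$
where $g$ is a composite of inner face maps and an isomorphism (hence a morphism of $\AA$), and $T_f\hookrightarrow T$ is the inclusion of a subtree obtained from $T$ by pruning (a composite of outer face maps). Concretely, $T_f$ is the smallest subtree of $T$ containing the images of the leaves and of the root of $S$ --- the subtree spanned by the paths joining the leaf-images down to the root-image --- and, since pruning a boundary vertex of $T$ does not change the valence of any remaining vertex, $T_f$ again lies in $\OOmegao$. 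The subtrees $R$ of $T$ arising this way are parametrised by an edge $\rho$ of $T$ (the root of $R$) together with an antichain of edges of $T$ lying above $\rho$ (the leaves of $R$). Since precomposition by a morphism of $\AA$ changes neither the image of the root nor the set of images of the leaves, this factorization is natural in $S\in\AA$, and one obtains an isomorphism of presheaves on $\AA$
$$ \ZZ\OOmegao(-,T)\big|_{\AA}\;\cong\;\bigoplus_{R}\ZZ\AA(-,R), $$
the sum being over all such subtrees $R\subseteq T$.

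Dendroidal homology takes this direct sum to a direct sum, since $\DsymC$ is additive and homology commutes with direct sums of complexes. By Proposition~\ref{PropRepre}, $DH_*(\ZZ\AA(-,R))$ vanishes unless $R$ is a corolla $C_\ell$, in which case it is $\ZZ[\Sigma_\ell]$ concentrated in degree $-1$. Finally, $R$ is a corolla precisely when it consists of a single vertex, that is, when $\rho$ is the output edge of some vertex $v$ of $T$ and the chosen antichain is the set of all input edges of $v$; this sets up a bijection between the corolla subtrees of $T$ and the vertices $v\in Vert(T)$, under which $R\cong C_{|v|}$. Combining these statements gives the claimed formula.

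The only real obstacle is the first step, that is, establishing (or locating in \cite{MW1}) the unique factorization of an arbitrary morphism of $\OOmega$ into an ``inner-face-and-isomorphism'' part followed by an ``outer-face'' part, and checking that the outer part records exactly the minimal subtree of $T$ through which $f$ factors. After that, additivity of $DH$, the appeal to Proposition~\ref{PropRepre}, and the bookkeeping identifying corolla subtrees with vertices are all routine.
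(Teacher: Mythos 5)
Your argument is correct and is essentially the paper's own proof: you decompose the restriction $j^*\OOmegao(-,T)$ as a coproduct of representables $\AA(-,R)$ over the external-face subtrees $R\mono T$ via the unique (inner face and isomorphism)/(outer face) factorization, apply Proposition~\ref{PropRepre}, and identify corolla subtrees with vertices of $T$, exactly as in the paper. One tiny caveat: not every pair (root edge, antichain above it) actually yields an external-face subtree (all inputs of each included vertex must be included), so your ``parametrised by'' is a slight overstatement, but this plays no role since you only use the (correct) bijection between one-vertex subtrees and vertices.
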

 
 \begin{proof}
 It suffices to notice that the restriction of $\OOmegao(-,T)$ to $\AA \subset \OOmegao$ decomposes as a sum of representables. Writing $j:\AA \to \OOmegao$ for the inclusion,
  $$j^* \OOmegao(-,T) = \coprod_{R \subset T} \AA(-,R)$$
  where the coproduct ranges over external faces $R \mono T$ of $T$.
   Indeed, any map $S \to T$ in $\OOmegao$ decomposes uniquely as a map $S\mono R$ in $\AA$ followed by an external face $R \mono T$.
   As external faces $R \to T$ where $R$ is a corolla are in bijective correspondence with vertices of $T$, the result now follows from the
   previous proposition.
\end{proof}

It is possible to express dendroidal homology in terms of the homology of a pair of categories.
Recall that for a category $\mathbb C$ and a \emph{covariant} functor $A : \mathbb C \to Ab$, the homology $H_*(\mathbb C, A)$ is defined as the homology of the chain complex
$$C_n(\mathbb C, A)= \bigoplus_{c_0 \to \ldots \to c_n} A(c_0)$$
where the sum is taken over strings of morphisms $c_0 \to \ldots \to c_n$ in $\mathbb C$.
The differential is $\partial = \sum (-1)^i \partial_i$ where $\partial_i$  maps the summand for  $c_0 \to \ldots \to c_n$ to the one for 
 $c_0 \to \ldots \to \hat{c_i} \to \ldots \to c_n$ by the identity of $A(c_0)$  if $i \neq 0$ and by $A(c_0)\to A(c_1)$ if $i=0$.
 If $\mathbb D \subset \mathbb C$ is a subcategory, the homology of the pair
 $H_*(\mathbb C, \mathbb D ; A)$ is defined to be that of the quotient complex
 $$0 \to C_*(\mathbb D, A) \to C_*(\mathbb C, A) \to C_*(\mathbb C, \mathbb D ; A) \to 0,$$
as usual.

If $A$ is a \emph{contravariant} functor on $\mathbb C$, we write $C_*(\mathbb C, A)$ for
$C_*(\mathbb C^{op}, A)$, and similarly for $H_*(\mathbb C, A)$ and for pairs. 

\begin{prop}\label{PropHomCat}
For a given $\ell \geq 2$, consider the category $C_\ell / \AA$ and its full subcategory $C_\ell /\!/ \AA$ on the non-isomorphisms $C_\ell \mono T$.
Then for $M:\AAop \to Ab$ and its restriction to $C_\ell/\AA$,
$$DH^{(\ell)}_*(M)=H_{*+1}(C_\ell/\AA, C_\ell/\!/\AA ; M).$$
\end{prop}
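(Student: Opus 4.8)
The plan is to deduce the identity from two long exact sequences glued together by one filtration argument, using the complex $\DsymC^{(\ell)}_*(M)$ itself (which computes $DH^{(\ell)}_*(M)$ in the case at hand, since $M$ is concentrated in chain‑degree $0$). Write $j:C_\ell/\!/\AA\hookrightarrow C_\ell/\AA$ for the inclusion.

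\textbf{Step 1 (the ambient category).} First I would note that $(C_\ell,\mathrm{id}_{C_\ell})$ is an initial object of $C_\ell/\AA$: a morphism $(C_\ell,\mathrm{id})\to(C_\ell\xrightarrow{\alpha}T)$ is just $\alpha$. Hence $(C_\ell/\AA)^{op}$ has a terminal object, the colimit of any functor on it is exact in that functor, and therefore $H_*(C_\ell/\AA,M)=M(C_\ell)$, concentrated in degree $0$, for every $M:\AA^{op}\to Ab$.

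\textbf{Step 2 (the main comparison).} The heart of the proof is the claim that the brutally truncated complex $\sigma_{\geq0}\DsymC^{(\ell)}_*(M)=(\cdots\to\DsymC_1(M)^{(\ell)}\to\DsymC_0(M)^{(\ell)}\to0)$ is naturally quasi‑isomorphic to the bar complex $C_*(C_\ell/\!/\AA,M)$. I would first pass to a skeleton of $C_\ell/\AA$: homology of a category is equivalence‑invariant, and the isomorphism‑coinvariants in the definition of $\DsymC$ are exactly the effect of this, since $\mathrm{Aut}(T)$ acts freely on the enumerations of the leaves of $T$. In the skeleton $C_\ell/\!/\AA$ is a poset, whose objects are iso‑classes of pairs $(T,\alpha)$ with $T$ not a corolla, and whose slice below a fixed $(T,\alpha)$ is canonically the Boolean lattice $2^{E(T)}$ of subsets of the set of inner edges (the map $(S,\beta)\to(T,\alpha)$ is unique once the inclusion $E(S)\subseteq E(T)$ is fixed). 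Now filter the normalised bar complex of this poset by $k=$ the number of inner edges of the largest tree occurring in a chain; this is a finite filtration by subcomplexes, because the only face lowering $k$ — the one deleting the largest tree and restricting $M$ along the corresponding inner face — strictly decreases it. On the associated graded the differential only deletes interior terms of the chain, so for fixed $T$ the summand is $M(T)$ tensored with the reduced chain complex of the order complex of the poset of proper nonempty subsets of $E(T)$, a barycentric subdivision of $\partial\Delta^{E(T)}$; its reduced homology is $\det(\ZZ E(T))$, concentrated in degree $|E(T)|-1$. Thus $E^1=\DsymC^{(\ell)}_*(M)$ in degrees $\geq0$, and $d^1$, induced by the top inner face, acts on the fundamental cycle of $\partial\Delta^{E(T)}$ as the alternating sum of the edge‑contractions, i.e.\ as the differential of $\DsymC^{(\ell)}_*$. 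Since on $E^1$ the filtration degree of a homogeneous class is one more than its homological degree, the higher differentials $d^r$ ($r\geq2$) vanish for bidegree reasons and there are no extension problems, giving the asserted natural quasi‑isomorphism.

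\textbf{Step 3 (assembly).} The truncation yields a short exact sequence of complexes $0\to M(C_\ell)[\text{in degree }-1]\to\DsymC^{(\ell)}_*(M)\to\sigma_{\geq0}\DsymC^{(\ell)}_*(M)\to0$; its long exact sequence together with Step 2 identifies $DH^{(\ell)}_p(M)$ with $H_p(C_\ell/\!/\AA,M)$ for $p\geq1$, with $\ker\big(H_0(C_\ell/\!/\AA,M)\to M(C_\ell)\big)$ for $p=0$, and with its cokernel for $p=-1$. On the other hand the long exact sequence of the pair $(C_\ell/\AA,C_\ell/\!/\AA)$, fed with $H_*(C_\ell/\AA,M)=M(C_\ell)$ from Step 1, gives the identical description of $H_{p+1}(C_\ell/\AA,C_\ell/\!/\AA;M)$. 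Matching the two requires only that the connecting map coincides with the map $H_0(C_\ell/\!/\AA,M)\to M(C_\ell)$ induced by $j$, which is a direct check on chains. The places where the (entirely elementary) work concentrates are exactly this naturality check and the sign/orientation bookkeeping in identifying $E^1$ and $d^1$ in Step 2; that bookkeeping — reconciling the ``enumeration up to sign and tree‑isomorphism'' packaging of $\DsymC$ with the ``chains of morphisms'' packaging of the bar complex — is the part I expect to be most delicate. (An alternative route would be to observe that both sides are homology theories on $\Ch^{\AA^{op}}$ — invariant under quasi‑isomorphism, commuting with filtered colimits, with long exact sequences, and supported in the band $-1\le p\le\ell-3$ — and agree on representables via Proposition~\ref{PropRepre}; but producing the comparison natural transformation needed to run a dévissage essentially reproduces Step 2, so I would carry out the argument as above.)
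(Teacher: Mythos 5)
Your argument is correct in substance, but it follows a genuinely different route from the paper's. The paper introduces a single auxiliary double complex $C_{p,q}$ indexed by strings $C_\ell \mono R \mono T_0 \mono \cdots \mono T_q$ together with enumerations of $E(R)$, and plays its two spectral sequences against each other: for fixed $p$ and $R$ the column is the bar complex of the slice $R/\AA$, which contracts to $M(R)$ (so the total complex computes $DH^{(\ell)}_*(M)$), while for fixed $q$ the row is a sum of the complexes computing the dendroidal homology of representables, which Proposition~\ref{PropRepre} collapses to the $p=-1$ column; what remains there is literally the relative chain complex of the pair $(C_\ell/\AA, C_\ell/\!/\AA)$, so the degree shift and the low-degree cases come out automatically, with no separate analysis at $p=0,-1$. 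You instead compare $\sigma_{\geq0}\DsymC^{(\ell)}_*(M)$ directly with the absolute bar complex of $C_\ell/\!/\AA$ (after passing to the poset skeleton, legitimate since automorphisms of open reduced trees act freely on leaf labellings) by filtering chains by the size of their maximal tree; the associated graded is governed by $\tilde H_*(\partial\Delta^{E(T)})$, a partition-complex style computation that plays the role Proposition~\ref{PropRepre} plays in the paper (full simplex contractible there, boundary sphere here), and you then assemble the relative statement from the truncation sequence and the long exact sequence of the pair, using the initial object of $C_\ell/\AA$. This costs you the extra compatibility check between the truncation connecting map and $j_*$ in degrees $0$ and $-1$ (which you correctly flag, and which is indeed a direct chain-level check on one-inner-edge trees), plus the sign/orientation bookkeeping identifying $E^1$ and $d^1$ with $\DsymC$ and its differential; what it buys is a self-contained argument that avoids the interpolating double complex and makes the link with partition-poset homology explicit. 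One small imprecision: the reduced homology of the order complex of nonempty proper subsets of $E(T)$ sits in degree $|E(T)|-2$, not $|E(T)|-1$; the extra $+1$ comes from the top object $T$ in the bar chains, and your subsequent identification of $E^1$ with $\DsymC_p$ for trees with $p+1$ inner edges is the correct one, so this is a wording slip rather than a gap.
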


\begin{proof}
 Consider the double complex (with values in chain complexes)
 $$C_{p,q} = \bigg( \bigoplus_{ C_\ell \mono R \mono T_0 \mono \ldots \mono T_q} M(T_q) \bigg) / \sim$$
 where $q \geq 0$, $p\geq-1$, $C_\ell \stackrel{e}\mono R \mono T_0 \mono \ldots \mono T_q$ is in $\AA$, 
 $e=(e_0, \ldots, e_p)$ enumerates inner edges of $R$,
 and we make identifications for permutations of the $e_i$'s (with signs) and isomorphisms of the kind
 \begin{equation*}
\xymatrix@M=8pt@R=6pt
{ & R \ar@{>->}[dr] \ar[dd]^{\wr} &&& \\
C_\ell \ar@{>->}[ur] \ar@{>->}[dr] & 
. &
T_0 \ar@{>->}[r] & \ldots  \ar@{>->}[r]& T_q. \\
 & R'\ar@{>->}[ur] & 
}
\end{equation*}
The differential in the $p$-direction is the alterning sum of $\partial_i$'s, as defined in the previous proof. The vertical differential in the $q$-direction is the simplicial one on the part $T_0 \mono \ldots \mono T_q$. 

For $p$ and $C_\ell \stackrel{e}\mono R$ fixed, the complex computes the homology of $R/\AA$ with coefficients in $M$, so this contracts to $M(R)$. Since the quotient is by a simple groupoid (see Appendix), we obtain that $H_q(C_{p,-})=0$ for $q>0$ and $H_q (C_{p,-})=\newDC_p(M)$ if $q=0$.
We conclude that the total complex computes $DH_p(M)$.

On the other hand, for $q$
fixed, we get a sum over $C_\ell \stackrel{\gamma}\mono T_0 \mono \ldots \mono T_q$ of complexes computing the dendroidal homology of the representables 
$\AA(-, T_0)$. This vanishes when $\gamma$ is not an isomorphism (ie. when $\gamma$ is a morphism in $C_\ell /\!/ \AA$), as we just saw,
so for fixed $q$
$$H_p(C_{-,q}) = 
\left\{\begin{array}{ll}
\displaystyle{ \bigoplus_{C_\ell \isomap T_0 \mono \ldots \mono T_q} M(T_q) } & \text{if } p=-1 \\ 
0 & \text{if } p \neq -1.\\
\end{array}\right.
$$
So the spectral sequence of the double complex with 
$$E^1_{p,q} = H_p(C_{-,q})$$
collapses, and the differential $d^1:E^1_{p,q} \to E^1_{p,q-1}$ 
is induced by the vertical differential. 
Thus 
$(E^1_{-1,q},d^1)$ is the complex computing the homology 
$H_{\ast}(C_\ell/\AA, C_\ell/\!/\AA ; M).$
\end{proof}

\begin{remark}
A similar statement of course follows for a presheaf $M$ with values in chain complexes and the corresponding hyperhomology of a pair of categories.
\end{remark}

\begin{remark}
There is a long exact sequence induced by the short exact sequence of complexes
 $0 \to C_*(C_\ell /\!/ \AA ; M) \to C_*(C_\ell / \AA ; M) \to C_*(C_\ell / \AA, C_\ell /\!/ \AA ; M) \to 0.$
Moreover $C_\ell / \AA$ has an initial object. Thus
$DH_*^{(\ell)}(M)$ is $H_*(C_\ell /\!/ \AA ; M)$ for $* \geq 1$.
\end{remark}

\begin{remark}\label{RemOpPartPoset}
If $M$ is a constant presheaf, then in positive degrees $DH_*^{(\ell)}(M)$ is the homology of the classifying space of $C_\ell /\!/\AA$, which is related to the partition poset of $\{1, \ldots, \ell \}$. This applies in particular to the nerve of the commutative operad. This partition poset is homotopy equivalent to a wedge of spheres, see Robinson-Whitehouse~\cite{RW}. 
Operadic partition posets (see \cite {V}) can be recovered as $C_\ell/\!/\AA / N(P)$, that is the category of objects strictly under the corolla and over $N(P)$.

\end{remark}

\section{Homology of dendroidal spaces}\label{S:DHom}

For $X$ an open reduced dendroidal space, we denote by $DH_*(X)$ the homology of $\newDC_*(\ZZ[X])$.  
The results of the previous section imply the following corollaries.

\begin{cor}
For an open and reduced dendroidal space $X$ and for a pushout along a monomorphism
\begin{equation*}
\xymatrix@M=10pt{
U  \ar@{->}[r] \ar@{>->}[d] & X \ar@{>->}[d]\\
V \ar@{->}[r] & Y }
\end{equation*}  
 there is a natural long exact Mayer-Vietoris sequence
 $$ \ldots \to DH_n(U) \to DH_n(V) \oplus DH_n(X) \to DH_n(Y) \to DH_{n-1}(U) \to \ldots.$$\\
 \end{cor}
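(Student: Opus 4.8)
The plan is to deduce the Mayer-Vietoris sequence from the homological properties of $DC_*$ already established, together with left-exactness considerations for the pushout square. First I would observe that since the square is a pushout along a monomorphism $U \mono V$ of dendroidal spaces, and monomorphisms and pushouts of presheaves of simplicial sets are computed objectwise, for each tree $T$ the square of simplicial sets $X(T) \leftarrow U(T) \to V(T)$ with pushout $Y(T)$ is a pushout along a monomorphism. Applying the free simplicial abelian group functor $\ZZ[-]$ (which preserves monomorphisms and pushouts of simplicial sets) and then the normalized chains, we get for each $T$ a short exact sequence of chain complexes
\begin{equation*}
0 \to \ZZ[U](T) \to \ZZ[V](T) \oplus \ZZ[X](T) \to \ZZ[Y](T) \to 0,
\end{equation*}
natural in $T$, hence a short exact sequence of presheaves on $\AA$ with values in $\Ch$
\begin{equation*}
0 \to \ZZ[U] \to \ZZ[V] \oplus \ZZ[X] \to \ZZ[Y] \to 0.
\end{equation*}

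Next I would feed this short exact sequence into part (iii) of Proposition~\ref{propbasicprop}, which gives a long exact sequence relating $DH_*$ of the three presheaves. Since $DH_*$ is additive on finite direct sums of presheaves (this is immediate from the definition of $\newDC_*$, which is built from direct sums of the values $M(T)$, and commutes with finite biproducts; alternatively it follows from part (iv) applied degreewise, or simply from exactness applied to the split sequence $0 \to \ZZ[V] \to \ZZ[V] \oplus \ZZ[X] \to \ZZ[X] \to 0$), we have $DH_n(\ZZ[V] \oplus \ZZ[X]) \cong DH_n(\ZZ[V]) \oplus DH_n(\ZZ[X]) = DH_n(V) \oplus DH_n(X)$, using the notational convention $DH_*(Z) := H_*(\newDC_*(\ZZ[Z]))$ for a dendroidal space $Z$. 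Substituting this identification and $DH_n(\ZZ[U]) = DH_n(U)$, $DH_n(\ZZ[Y]) = DH_n(Y)$ into the long exact sequence of (iii) yields exactly the claimed Mayer-Vietoris sequence, and naturality is inherited from the naturality of the long exact sequence in Proposition~\ref{propbasicprop}(iii) together with functoriality of $\ZZ[-]$ in the pushout data.

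The only genuine point requiring care — and the main (mild) obstacle — is verifying that forming $\ZZ[-]$ of the objectwise pushout square really does produce a short exact sequence of chain complexes, i.e. that the sequence $0 \to \ZZ[A] \to \ZZ[B] \oplus \ZZ[A'] \to \ZZ[B'] \to 0$ is exact whenever $B' = B \cup_A A'$ with $A \mono A'$ a monomorphism of simplicial sets. This is a standard fact: on a monomorphism the pushout $B'$ is obtained from $B$ by attaching, in each simplicial degree, a free $\ZZ$-summand on the simplices of $A'$ not in $A$, so degreewise the sequence of abelian groups is split short exact, hence exact as a sequence of complexes; exactness then persists after passing to the associated chain complex. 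I would state this as a brief lemma or simply cite it, and otherwise the corollary follows formally. (One could equally phrase the whole argument without mentioning $\ZZ[-]$ explicitly, by noting that $X \mapsto \ZZ[X]$ sends homotopy pushouts along monomorphisms to homotopy pushouts and invoking (i) and (iii), but the elementary objectwise argument above is cleaner and avoids model-categorical input.)
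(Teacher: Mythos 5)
Your argument is correct and is essentially the paper's own proof: the pushout along a monomorphism yields the short exact sequence $0 \to \ZZ[U] \to \ZZ[V] \oplus \ZZ[X] \to \ZZ[Y] \to 0$ of presheaves, and Proposition~\ref{propbasicprop}(iii) then gives the Mayer--Vietoris sequence. The extra verifications you supply (objectwise exactness after applying $\ZZ[-]$, additivity of $DH_*$ on direct sums) are correct elaborations of steps the paper leaves implicit.
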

 
 \begin{proof}
 This follows from Prop~\ref{propbasicprop} (iii) because the pushout yields a short exact sequence
 $$0 \to \ZZ[U] \to \ZZ[V] \oplus \ZZ[X] \to \ZZ[Y] \to 0.$$
 \end{proof}
 
 \begin{cor}
 For an open and reduced dendroidal set $X$, there is a natural isomorphism
$$DH_*(X)=H_{*+1}(\AA/X, \AA_{>0}/X; \ZZ).$$
where $\AA/X$ is the category of elements of $X$ and $\AA_{>0}$ is the full subcategory of $\AA$ on trees with at least one inner edge.
 \end{cor}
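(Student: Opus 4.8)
The plan is to deduce the corollary from Proposition~\ref{PropHomCat} and Proposition~\ref{propbasicprop}(iv), after reinterpreting the homology of a category with coefficients in a ``free'' system $\ZZ[X]$ as the homology of the associated category of elements with constant coefficients $\ZZ$.

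The one non-formal ingredient I would isolate first is an elementary comparison of complexes. Let $\mathbb D$ be a small category and $Y$ a presheaf of sets on $\mathbb D$, with category of elements $\mathbb D/Y$. A morphism $(d,y)\to(d',y')$ of $\mathbb D/Y$ is a morphism $f\colon d\to d'$ of $\mathbb D$ with $Y(f)(y')=y$, hence is determined by $f$ together with $y'$; so an $n$-simplex of the nerve $N(\mathbb D/Y)$ is the same datum as a string $d_0\to\cdots\to d_n$ in $\mathbb D$ together with a single element $y_n\in Y(d_n)$. Comparing this with the complex $C_*(\mathbb D,\ZZ[Y])=C_*(\mathbb D^{op},\ZZ[Y])$ recalled before Proposition~\ref{PropHomCat}, I would check that the two agree termwise and that their face operators match: the faces not involving the last vertex act as the identity on the coefficient $\ZZ[Y(d_n)]$ on either side, while the remaining face of $N(\mathbb D/Y)$, which replaces $y_n$ by its image under $Y$, corresponds to the differential $d_0$ of $C_*(\mathbb D^{op},\ZZ[Y])$ coming from the coefficient system. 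This gives a natural isomorphism of chain complexes $C_*(\mathbb D,\ZZ[Y])\cong C_*(\mathbb D/Y;\ZZ)$. Since it is built vertexwise, it restricts along any full subcategory $\mathbb E\subseteq\mathbb D$ to $C_*(\mathbb E,\ZZ[Y])\cong C_*(\mathbb E/Y;\ZZ)$, and hence also identifies the relative complexes, $C_*(\mathbb D,\mathbb E;\ZZ[Y])\cong C_*(\mathbb D/Y,\mathbb E/Y;\ZZ)$.

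I would then fix $\ell\ge 2$. Because the corolla $C_\ell$ is an initial object of $\AA^{(\ell)}$, the forgetful functor $C_\ell/\AA=C_\ell/\AA^{(\ell)}\to\AA^{(\ell)}$ is an isomorphism of categories, under which the full subcategory $C_\ell/\!/\AA$ of non-isomorphisms $C_\ell\mono T$ goes to the full subcategory $\AA^{(\ell)}_{>0}\subseteq\AA^{(\ell)}$ of trees with at least one inner edge. As $X$ is a dendroidal set, $\ZZ[X]$ is a presheaf on $\AA$ with values in abelian groups, so Proposition~\ref{PropHomCat} applies to it directly; transporting it along this isomorphism and then inserting the comparison above (with $\mathbb D=\AA^{(\ell)}$, $\mathbb E=\AA^{(\ell)}_{>0}$ and $Y$ the restriction of $X$) gives
\begin{align*}
DH_*^{(\ell)}(X) &= H_{*+1}\big(C_\ell/\AA,\,C_\ell/\!/\AA;\,\ZZ[X]\big)\\
&= H_{*+1}\big(\AA^{(\ell)},\,\AA^{(\ell)}_{>0};\,\ZZ[X]\big)\\
&= H_{*+1}\big(\AA^{(\ell)}/X,\,\AA^{(\ell)}_{>0}/X;\,\ZZ\big).
\end{align*}

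Finally I would reassemble the arities. By Proposition~\ref{propbasicprop}(iv) we have $DH_*(X)=\bigoplus_{\ell\ge 2}DH_*^{(\ell)}(X)$, while $\AA$ is the disjoint union of its subcategories $\AA^{(\ell)}$, so the category of elements splits as $\AA/X=\coprod_{\ell\ge 2}\AA^{(\ell)}/X$ and likewise $\AA_{>0}/X=\coprod_{\ell\ge 2}\AA^{(\ell)}_{>0}/X$; consequently the relative homology on the right-hand side of the corollary is the direct sum over $\ell$ of the groups in the last display. Summing that display over $\ell$ yields the asserted identity $DH_*(X)=H_{*+1}(\AA/X,\AA_{>0}/X;\ZZ)$, natural in $X$ since each step is functorial. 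I expect the only genuine work to be the chain-level comparison $C_*(\mathbb D,\ZZ[Y])\cong C_*(\mathbb D/Y;\ZZ)$ and its compatibility with full subcategories and with relative complexes; granting that, the corollary is bookkeeping on top of Proposition~\ref{PropHomCat} and the initiality of $C_\ell$ in $\AA^{(\ell)}$.
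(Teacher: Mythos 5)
Your first ingredient is fine: the termwise identification $C_*(\mathbb D,\ZZ[Y])\cong C_*(\mathbb D/Y;\ZZ)$ for a set-valued presheaf $Y$, its compatibility with full subcategories and hence with relative complexes, is correct (up to the harmless front-to-back reindexing of strings), and combining it with Proposition~\ref{PropHomCat} and the arity decomposition of Proposition~\ref{propbasicprop}(iv) is indeed the intended route to this corollary.

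The gap is your second step: the forgetful functor $C_\ell/\AA\to\AA^{(\ell)}$ is \emph{not} an isomorphism of categories. In $\AA$ a morphism $C_\ell\mono T$ is the same thing as an enumeration of the leaves of $T$, so $\AA(C_\ell,T)$ has $\ell!$ elements and $\mathrm{Aut}_{\AA}(C_\ell)=\Sigma_\ell$; the corolla is only weakly initial in $\AA^{(\ell)}$ (the text's phrase ``initial object'' is loose), and $C_\ell/\AA^{(\ell)}$ is a discrete opfibration over $\AA^{(\ell)}$ with fibres of cardinality $\ell!$, not isomorphic to it. So your second displayed equality fails, and it cannot be repaired while keeping the plain category of elements on the right-hand side: test $X$ with $X|_{\AA}=\AA(-,C_\ell)$ (or the representable $\OOmegao(-,C_\ell)$, using Corollary~\ref{corRepre}). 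Proposition~\ref{PropRepre} gives $DH_{-1}(X)=\ZZ[\Sigma_\ell]$, whereas the category of elements of $\AA(-,C_\ell)$ is the slice $\AA/C_\ell$, which has the terminal object $(C_\ell,\mathrm{id})$, and $\AA_{>0}/X$ is empty, so $H_0(\AA/X,\AA_{>0}/X;\ZZ)=\ZZ$, of rank $1$ rather than $\ell!$. What your argument actually proves, applying your comparison with $\mathbb D=C_\ell/\AA$ and $\mathbb E=C_\ell/\!/\AA$ and summing over $\ell$, is the identification $DH_*(X)\cong\bigoplus_{\ell\ge 2}H_{*+1}\bigl((C_\ell/\AA)/X,\,(C_\ell/\!/\AA)/X;\ZZ\bigr)$, in which an object of the category of elements is a tree \emph{equipped with an enumeration of its leaves} (a map from the corolla) together with an element of $X$. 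That is how the corollary's $\AA/X$ has to be understood for the statement to be consistent with Proposition~\ref{PropRepre}; with that reading, your proof, minus the false identification $C_\ell/\AA\cong\AA^{(\ell)}$, goes through.
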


The categories $dSets$ and $dSpaces$ of dendroidal sets and spaces carry various Quillen model structures.
The main ones are the operadic model structure on $dSets$ and the model structure for complete Segal dendroidal spaces on $dSpaces$. These two model structures are Quillen equivalent to each other, as well as to the natural model structure on simplicial operads.
We refer to \cite{CM} or \cite{HM} for precise statements and further details.
Similar statements hold for presheaves on the smaller category $\OOmegao$ of open and reduced trees. These categories of presheaves are denoted 
$ordSets$ and $ordSpaces$ -- open and reduced dendroidal sets and spaces.
The Quillen model structures mentioned above restrict to these smaller categories, and make them Quillen equivalent to each other and to the category of simplicial operads $P$ which are open (no nullary operation) and reduced (the spaces of unary operations are weakly contractible). 

The remarks above motivate the study of the homology of objects in $ordSpaces$. The relevant model structure is the localisation of the generalised Reedy model structure, localised by the so-called Segal maps
$$S \cup_a T \mono S \circ_a T$$
for two trees $S$ and $T$ and the grafting $S \circ_a T$ of $T$ onto a leaf $a$ in $S$.
We identify trees $T$ with the corresponding representable functors $\OOmegao(-,T)$, viewed as discrete dendroidal spaces, open and reduced if $T$ is.
The cofibrations in this category are the normal monomorphisms: those  monomorphisms $X \mono Y$ for which each automorphism group $Aut(T)$ of a tree $T$ acts freely on the complement of the image of $X(T)$ in $Y(T)$. The fibrant objects in this model category are the $\infty$-operads:
Reedy fibrant objects $X$ of $ordSpaces$ for which the Segal maps $X(S \circ_e T) \to X(S) \times_ {X(\eta)} X(T)$ are all weak equivalences, see \cite {CM}.

The theory of Quillen model categories enables us to express the fundamental properties of dendroidal homology in a possibly more informative way.
The category $\sigmamod$ carries a projective model structure, in which the weak equivalences and fibrations are induced from those of chain complexes, and in which the cofibrations are the maps $A \to B$ which are monomorphisms with projective cokernel (i.e., each $A^{(\ell)} \mono B^{(\ell)}$ has a projective $\ZZ[\Sigma_\ell]$-module as a cokernel).

\begin{theorem}
 The functor $\DsymC : ordSpaces \to \sigmamod$ is a left Quillen functor with respect to the model structures mentioned above.
\end{theorem}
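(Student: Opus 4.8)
The plan is to factor $\DsymC$ through the unlocalised generalised Reedy model structure on $ordSpaces$ and then to invoke the standard recognition criterion for a left Quillen functor out of a left Bousfield localisation. As preliminaries I would record two formal facts. First, $\DsymC\colon ordSpaces\to\sigmamod$ is the composite
\[
ordSpaces \xrightarrow{\ZZ[-]} \Ch^{(\OOmegao)^{op}} \xrightarrow{j^*} \Ch^{\AAop} \xrightarrow{\DsymC} \sigmamod,
\]
of the objectwise ``normalised chains of the free simplicial abelian group'' functor $\ZZ[-]$, the restriction $j^*$ along $j\colon\AA\hookrightarrow\OOmegao$, and the additive functor $\DsymC$ on $\Ch^{\AAop}$; each preserves all colimits --- the last because it is assembled objectwise from evaluations $M\mapsto M(T)$, direct sums and groupoid coinvariants --- so between these presentable categories $\DsymC$ is a left adjoint. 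Second, $\DsymC$ on $\Ch^{\AAop}$ is \emph{exact}: the groupoid $H$ of the definition has trivial automorphism groups, since an automorphism of a tree $T$ fixing an enumeration $C_\ell\mono T$ of its leaves is the identity (and then so is the induced permutation of the inner edges), so taking $H$-coinvariants is merely a direct sum over the free orbit set.

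The decisive point, and what I expect to be the main obstacle, is to show that $\DsymC$ sends the cofibrations of $ordSpaces$ --- the normal monomorphisms --- to cofibrations of $\sigmamod$. If $X\mono Y$ is normal, then the cokernel $Q$ of $\ZZ[j^*X]\to\ZZ[j^*Y]$ has, for each tree $T$, the property that $Q(T)$ is in every chain degree a \emph{free} $\ZZ[\mathrm{Aut}(T)]$-module, a basis being given by the non-degenerate cells of $Y(T)$ outside the image of $X(T)$, on which $\mathrm{Aut}(T)$ acts freely by normality. One must then re-examine the bookkeeping behind Proposition~\ref{PropRepre}: for a fixed tree $T$ with $\ell$ leaves, $\Sigma_\ell$ acts on the set of leaf-enumerations $\alpha\colon C_\ell\mono T$, and the stabiliser of $\alpha$ is exactly the copy of $\mathrm{Aut}(T)$ that it determines, so that passing to $H$-coinvariants exhibits the $T$-summand of $\DsymC_p(Q)^{(\ell)}$ as $\mathrm{Ind}_{\mathrm{Aut}(T)}^{\Sigma_\ell}$ of a (possibly sign-twisted) free $\ZZ[\mathrm{Aut}(T)]$-module, hence as a free $\ZZ[\Sigma_\ell]$-module. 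Since $\DsymC$ is exact and cocontinuous, $\DsymC(X)\mono\DsymC(Y)$ is then a monomorphism with degreewise free cokernel $\DsymC(Q)$, i.e.\ a cofibration. The same computation shows that $\DsymC(\ZZ[-])$ of a representable $\OOmegao(-,R)$ --- and hence of the Segal cores appearing below --- is a cofibrant object of $\sigmamod$.

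I would then argue in two stages. First, $\DsymC$ is left Quillen for the \emph{unlocalised} generalised Reedy structure on $ordSpaces$: it preserves cofibrations by the previous paragraph, and a Reedy trivial cofibration, being in particular an objectwise weak homotopy equivalence, is taken by $\ZZ[j^*(-)]$ to an objectwise quasi-isomorphism and hence by $\DsymC$ to a quasi-isomorphism by Proposition~\ref{propbasicprop}(i); combined with the cofibration property this makes it a trivial cofibration of $\sigmamod$. Second, by the recognition criterion for left Bousfield localisations --- available since the generalised Reedy structure is left proper and cofibrantly generated --- it suffices to check that the total left derived functor $\mathbf{L}\DsymC$ sends the localising maps, namely the Segal maps $S\cup_a T\mono S\circ_a T$ and their pushout-products with $\partial\Delta^n\mono\Delta^n$, to weak equivalences of $\sigmamod$. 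Since cofibrant replacement alters $\DsymC(\ZZ[-])$ only up to quasi-isomorphism (Proposition~\ref{propbasicprop}(i) once more), one may compute with the objects themselves.

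For a Segal map itself: since $\DsymC$ is cocontinuous and $\DsymC(\ZZ[\eta])=0$ (the trivial tree $\eta$ has a single leaf, and so contributes nothing to $\sigmamod$, which is indexed by $\ell\ge2$), the pushout presentation $S\cup_a T=S\cup_\eta T$ gives $\DsymC(\ZZ[S\cup_a T])\cong\DsymC(\ZZ[S])\oplus\DsymC(\ZZ[T])$, and this is a homotopy pushout since the two legs are cofibrations. By Corollary~\ref{corRepre}, both the source and the target of the Segal map have dendroidal homology concentrated in degree $-1$, equal to $\bigoplus_v\ZZ[\Sigma_{inputs(v)}]$ over the vertices of the tree in question; grafting creates no new vertices, and each vertex-corolla of $S$ or of $T$ is carried to the corresponding vertex-corolla of $S\circ_a T$, so the Segal map is a quasi-isomorphism after applying $\DsymC(\ZZ[-])$. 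Finally, $\DsymC(\ZZ[-])$ is compatible up to natural chain homotopy equivalence with tensoring by $\ZZ[\Delta^n]$, so the pushout-products of the Segal maps with $\partial\Delta^n\mono\Delta^n$ become pushout-products of the above quasi-isomorphism \emph{between cofibrant objects} with the cofibration $\ZZ[\partial\Delta^n]\mono\ZZ[\Delta^n]$, which are again weak equivalences of $\sigmamod$. This establishes the theorem.
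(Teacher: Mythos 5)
Your proof is correct and follows essentially the same strategy as the paper's: existence of the right adjoint from cocontinuity, preservation of cofibrations from the normality condition forcing a free symmetric group action (hence projective cokernels), preservation of Reedy weak equivalences via Proposition~\ref{propbasicprop}(i), and the Segal maps handled through Corollary~\ref{corRepre} and the bijection on vertices. The only differences are in execution: you get projectivity of the cokernel by inducing a sign-twisted free $\mathrm{Aut}(T)$-module up to $\Sigma_\ell$ where the paper checks freeness of the $\Sigma_\ell$-action on generators directly, you split $\DsymC(\ZZ[S\cup_a T])$ as a direct sum using $\DsymC(\ZZ[\eta])=0$ where the paper uses Mayer--Vietoris, and you treat the simplicial direction via $\DsymC(\ZZ[D\boxtimes U])\simeq \DsymC(\ZZ[D])\otimes\ZZ[U]$ and a pushout-product argument where the paper fixes the simplicial degree and invokes the spectral sequence of the double complex.
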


\begin{proof}
 We need to check that the functor $\DsymC$ has a right adjoint, that it maps Reedy cofibrations to cofibrations and Reedy weak equivalences to weak equivalences (i.e., quasi-isomorphisms),
 and that is sends the localising trivial cofibrations of the form
 \begin{equation} \tag{$\ast$}
 (S\cup_a T) \boxtimes V \cup (S \circ_a T) \boxtimes U \to (S\circ_a T) \boxtimes V \label{Eq1}
 \end{equation}
 to weak equivalences.
 Here $S \circ_a T$ is a grafting as above, and $U \mono V$ is a monomorphism of simplicial sets.
 Moreover, for a dendroidal set $D$ and a simplicial set $U$, we write $D \boxtimes U$ for the dendroidal space $D \boxtimes U(T)_n=D(T) \times U_n = \coprod_{d \in D(T)} U_n$.
 
 Now, first of all, $\DsymC$ preserves all small colimits, hence has a right adjoint for general categorical reasons.
 This right adjoint $\mathcal R : \sigmamod \to ordSpaces$ is defined by
 $$\mathcal R(A)(T)_n = Hom(\DsymC(T \boxtimes \Delta[n]),A)$$
 where the $Hom$ is that of $\sigmamod$. See the remark below for an elaboration of this description of $\mathcal R$.
 
 Next, for a Reedy cofibration $X \mono Y$,  i.e. a normal monomorphism, the cokernel of 
 $\DsymC(X)^{(\ell)} \to \DsymC(Y)^{(\ell)}$ in bidegree  $(p,q)$ is
 $$\Bigg( \bigoplus_{\alpha:C^\ell \mono T,e} \ZZ [Y(T)_q \setminus X(T)_q]\Bigg)_{coinv}$$
 where
 $\alpha$ ranges over morphisms in $\AA$ 
 and $e=(e_0, \ldots, e_p)$ are enumerations of the inner edges of $T$,
 while we have identified $X(T)$ with its image in $Y(T)$.
 The coinvariants are by isomorphisms under $C_\ell$ (as in the first diagram of Section~\ref{S:DC}) 
 and permutations of the $e_i$'s.
 The action by $\theta \in \Sigma_\ell$ sends a generator $(T,\alpha,e,y)$ to $(T,\alpha \theta,e,y)$. 
 
 We claim that this action is free if $X \mono Y$ is normal.
 Indeed, if $(T,\alpha,e,y) = (T,\alpha \theta,e,y)$, then there exists a (unique) factorisation
\begin{equation*}
\xymatrix@M=10pt{
C_\ell  \ar@{>->}[r] & T \\
C_\ell \ar@{->}[u]_{\theta} \ar@{>->}[r]& T \ar@{-->}[u]_{\tau} }
\end{equation*}  
 for which $y=\tau^* y$. 
 If $y$ does not belong to the image of $X$ then $\tau$ is the identity by normality, and hence so is $\theta$. This shows that $\DsymC$ preserves cofibrations.
 
 Next, $\DsymC$ preserves Reedy weak equivalences. Indeed, if $X \mono Y$ has the property that each $X(T) \to Y(T)$ is a weak equivalence of simplicial sets, then the first item of Prop~\ref{propbasicprop} shows that $\DsymC(X) \to \DsymC(Y)$ is a homology isomorphism.
 
 Finally, we need to check that $\DsymC$ maps each trivial cofibration of the form (\ref{Eq1})
above to a quasi-isomorphism.
But in a fixed simplicial degree $q$, this map is a coproduct of copies of the identity on
$S \circ_a T$ and copies of the inclusion ${S\cup_a T} \to {S \circ_a T}$.
Since such an inclusion induces a bijection on vertices, the previous proposition on the Mayer-Vietoris sequence together with Corollary~\ref{corRepre} shows that for a fixed $q$, this map induces an isomorphism in homology.
Thus, by the spectral sequence, $\DsymC$ sends (\ref{Eq1}) to a quasi-isomorphism.
\end{proof}

\begin{remark}
Recall for a non-negative chain complex $B$ the Dold-Kan simplicial set $R(B)$ defined by
$$R(B)_n=Hom(N_*(\Delta[n]),B),$$
where $N_*(\Delta[n])$ is the normalised chain complex of $\Delta[n]$.
If $\Sigma_\ell$ acts on $B$, then it also acts on $R(B)$. In particular, $R$ can be viewed as a functor from $\sigmamod$ to symmetric sequences of simplicial sets. Now
$$\newDC(T \boxtimes \Delta[n])_m= \bigoplus_{p+q=m-1} \newDC_p(T) \otimes \ZZ[\Delta[n]]_q$$
By Corollary~\ref{corRepre}, the right hand side is quasi-isomorphic as a symmetric collection to
$$\bigoplus_{|v|=\ell} \ZZ[C_\ell] \otimes N_*(\Delta[n])$$
where $v$ ranges over vertices of $T$ and $|v|$ is the number of inputs of $v$.
So for $A$ is in the proof above
$$\mathcal R(A)(T)\simeq \prod_{|v|=\ell} R(A^{(\ell)}).$$

\end{remark}

\begin{remark}
 This theorem is false for the covariant and Picard model structures, see \cite{HM} or \cite{BN} where the Picard model structure is called the basic model structure.
 Indeed, one readily checks that for the tree $T=C_v \circ_e C_w$ with a single inner edge connecting a bottom vertex $v$ to a top vertex $w$, the ``top horn'' $\Lambda^v(T)=  \partial_e(T) \cup \partial_w(T) \mono T$ does not induce an isomorphism in homology (it is the map $\ZZ \oplus \ZZ \to \ZZ \oplus \ZZ$, which is the identity on the first component and zero on the second component).
\end{remark}

%

%

\section{Bar  construction}\label{S:DCold}
The goal of this section is to extend the bar  construction of operads to the present context, and give a reformulation in terms of the homology of categories. 

For a presheaf $M : \AAop \to \Ch$, we define a \textit{covariant} functor
$\DC_*(M)(-)$ from $\AA$ to double chain complexes .
For $S \in \AA$, define
$$\DC_p(M)(S) = \Bigg( \bigoplus_{\alpha : S \mono T, d,e} M(T) \Bigg)/H_S,$$
where
\begin{itemize}
\item $\alpha : S \mono T$ is a morphism in $\AA$
\item $d=(d_0, \ldots, d_{q-1})$ is an enumeration of the inner edges of $S$
\item $e=(e_0, \ldots, e_{p-1})$ is an enumeration of the inner edges of $T$ that do not belong to the image of $\alpha$,
\end{itemize}
and $H_S$ is the groupoid naturally acting on these data: it is the semi-direct product of the groupoid of extensions $S \mono T$ of $S$
and isomorphisms between them
\begin{equation*}
\xymatrix@M=10pt{
S \ar@{>->}[dr]_{\alpha'} \ar@{>->}[r]^{\alpha} & T \ar[d]_\theta^{\wr} \\
 & T' }
\end{equation*}  
and the symmetric groups $\Sigma_{q}=\Sigma_{\{0, \ldots, q-1\}}$ and $\Sigma_{p}$.

We will write elements of $\DC_p(M)(S)$ as (represented by) triples $$(\alpha, d|e, x)$$ where $\alpha,d,e$ are as above and $x \in M(T)$. We omit the tree $T$ from the notation to keep it simpler.
When necessary, we will write these elements as 
$$(\alpha, (d_0, \ldots, d_{q-1})|(e_0, \ldots, e_{p-1}), x).$$
The fact that $\DC_p(M)(S)$ is defined as coinvariants for $H_S$ comes down to the following identifications:
$$(\alpha, d|e, \theta^* x')=(\theta \alpha, d| \theta e, x')$$
for $x' \in M(T')$ and $\alpha, d,e, \theta$ as above, and
$$(\alpha, d|e, x)=(-1)^{\sigma}(-1)^{\tau}(\alpha, d\sigma|e\tau, x)$$
for any permutations
$\sigma \in \Sigma_{q}$ and $\tau \in \Sigma_{p}$, and where
$d \sigma$ is
the composition $\{0, \ldots, q-1\} \xrightarrow{\sigma} \{0, \ldots, q-1\} \xrightarrow{d} E(T)$ and $e\tau$ is defined in a similar way.

\begin{notation} \label{Not:Signs}
In the formula for $\partial_{ext}$ below, we write $(-1)^d$ for $(-1)^{q}$ with $d$ representing here the length of the sequence rather than the sequence itself. In the rest of the paper, we will often use this notation when writing signs.
\end{notation}

The first differential
$$ \partial_{ext}: \DC_p(M)(S) \to \DC_{p-1}(M)(S) $$
is defined  by
$$\partial_{ext} (\alpha, d|e, x) = \sum_{i=0}^{p-1} (-1)^{d+i} (\partial_{e_i}^* (\alpha), d|(e_0 \ldots \hat{e_i} \ldots e_{p-1}), \partial_{e_i}^* (x)),$$
where $\partial_{e_i} :\partial_{e_i}(T) \to T$ is the morphism in $\AA$ given by contracting the edge $e_i$, 
the map $\partial_{e_i}^* (\alpha): S \mono \partial_{e_i}(T)$ is the unique morphism in $\AA$ for which $\alpha= \partial_{e_i} \circ \partial_{e_i}^* (\alpha)$, 
and $\partial_{e_i}^*: M(T) \to M (\partial_{e_i} (T))$ is given by the presheaf structure on $M$.
This map  $\partial$ is a well-defined map and squares to zero.
The second differential $\partial_{int}$ is induced by the one of $M$ and the Koszul sign rule. 
More precisely, for $(\alpha, d|e, x)$ in $\DC_p(M)(S)$ and $x \in M_n(S)$,
$$\partial_{int}(\alpha, d|e, x)=(-1)^{d+e} (\alpha, d|e, \partial_M x.)$$
These two differentials anti-commute.

\begin{notation} 
We write $\DC_{p,n}(M)(S)$ for the double complex, with differentials $\partial_{ext}$ and $\partial_{int}$. 
Later in Notation~\ref{Not:Bar} we will consider a shifted total complex.
\end{notation}

\begin{remark}
As the category $\AA$  falls apart into pieces $\AA^{(\ell)}$ of trees with $\ell$ leaves, so does the functor $\DC_*(M)$.
Moreover, we can recover $\DsymC$ from $\DC$, in the following way:
$$\DsymC_*(M)= \prod_{\ell \geq 2} \DC_{*+1}(M)(C_\ell).$$
(Notice the shift, which as we will see later, is motivated by an induced structure of cooperad, see~\ref{BarIsCoop}.)
\end{remark}

\begin{remark}
Using the result of the appendix, the same complex can be given in terms of invariants:
$$\bar\DC_p(M)(S)= \Bigg( \prod_{S \mono T,d|e} M(T) \Bigg)^{inv}$$
Notice that this expression is obviously covariantly functorial in $\AA$:
for $\omega \in \bar\DC_p(M)(S)$, a morphism $\gamma: S \to S'$ in $\AA$, $\alpha:S' \mono T$, $d$ an enumeration of the inner edges of $S'$ and $e$ an enumeration of the inner edges of $T$ not in the image of $\alpha$,
the induced map $\bar \gamma_* : \bar\DC_p(M)(S) \to \bar\DC_{p-c}(M)(S')$ is given by
$$\bar \gamma_*(\omega)_{\alpha,d|e}=
(-1)^\tau \omega_{\alpha \gamma, d'|d''e}$$
where 
$\tau$ is a permutation acting on the enumeration $d$ such that $\tau d=(d',d'')$
with $d'$ an enumeration of the edges of $S$ (which can be seen as edges of $S'$ via $\gamma$) 
and $d''$ the edges in $S'$ ``contracted by $\gamma$''. Notice that $\bar \gamma^*$ preserves the degree if we take the degree of $\bar \DC_p(M)(S)$ to be $p+q$ where $q$ is the number of inner edges of $S$ (i.e. ``$d+e$'' in Notation~\ref{Not:Signs}).
\end{remark}

In terms of the definition by coinvariants, the map $\gamma_*:\DC_p(M)(S) \to \DC_{p-c}(M)(S')$ obtained by covariant functoriality from a morphism $\gamma: S \to S'$ in $\AA$ can be described as follows.
For an element of $\DC_p(M)(S)$ represented by $(\alpha, d|e, x)$ as above,
$\gamma_*(\alpha, d|e, x)=0$ if $\alpha: S \to T$ does not factor through $\gamma : S \to S'$. 
If, on the other hand, it does factor, we can write $\alpha = \beta \gamma$
for a unique $\beta$.
Then, up to isomorphism, 
$S$ is obtained from $S'$ by contracting a number $c$ of edges in $S'$. 
We refer to this number $c$ as the codimension of $\gamma$, $c=codim(\gamma)$.
The tree $S'$ is obtained by contracting $p-c$ edges of $T$.
Up to a permutation $\tau$, we can suppose that, among the edges $(e_0, \ldots, e_{p-1})$, the edges $e_0, \ldots, e_{c-1}$ are the edges in the image of $\beta$, while
$e_c, \ldots, e_{p-1}$ are the edges ``contracted by $\beta$''.
Then
$$\gamma_*(\alpha, d|e, x)=(\beta, (\gamma d_0, \ldots, \gamma d_{q-1}, \beta^{-1}e_0, \ldots, \beta^{-1}e_{c-1}) | (e_c, \ldots, e_{p-1}) , x).$$
This renders the following diagram commutative, where $\rho$ is the isomorphism of the appendix.
\begin{equation*}
\xymatrix@M=10pt{
S \ar@{->}[d]_\gamma 
& \DC_p(M)(S) \ar@{-->}[d]_{\gamma_*} \ar@{->}[r]^{\rho}_\sim 
& \bar\DC_p(M)(S) \ar@{->}[d]_{\bar\gamma_*}   \\
S'&  \DC_{p-c}(M)(S')
\ar[r]^\rho_\sim & \bar\DC_{p-c}(M)(S').}
\end{equation*} 
The description by invariants gives as a consequence: 
\begin{itemize}
\item For $\gamma : S \mono S'$, the map $\gamma_*$ as described above is well-defined on equivalence classes.
\item For $\gamma : S \mono S'$ and $\gamma' : S' \mono S''$, we have $(\gamma' \circ \gamma)_* = \gamma'_* \circ \gamma_*.$ 
\end{itemize}
Moreover, the map $\gamma_*$ commutes with $\partial_{ext}$.
This shows $\DC_*(M)$ is a well-defined functor from $\AA$  to double chain complexes.
Since the differential $\partial_{ext}$ commutes with that of $M$ up to sign, taking the homology of $\DC(M)$ with respect to $\partial_{ext}$ gives a new functor 
$H_*\DC(M) : \AA \to \Ch$.

\begin{remark}
Using the notation of the appendix, both differentials can also be described from the point of view of the invariants.
The map $\bar \partial_{ext}$ from $\bar \DC_p(M)(S)$ to $\bar \DC_{p-1}(M)(S)$ is given by
$$(\bar\partial_{ext})_{S \mono T, d|e} = (-1)^{d} \int_{T\mono T'} f^* (\omega_{S \mono T',d|fe})$$
where the integral is over the groupoid of codimension 1 extensions $f:T \mono T'$ of $T$, where
$f$ is also the name of the unique edge in $T'\setminus T$, and where the map $S \mono T'$ is given by the composition of $f$ with the map $\alpha:S\mono T$.
The map $\bar \partial_{int}$ from $\bar \DC_{p,n}(M)(S)$ to $\bar \DC_{p,n-1}(M)(S)$ is given by
$$(\bar\partial_{int})_{S \mono T, d|e} = (-1)^{d+e} \partial_M (\omega_{S \mono T,d|e}).$$
Both signs here can be interpreted as Koszul signs.

\end{remark}

%

The following two propositions can be proven in the same way as the analogous statements for $\newDC$ (see Proposition~\ref{PropRepre} and Proposition~\ref{PropHomCat}).

\begin{prop}
 For $R$ a tree and $M = \ZZ[\AA(-,R)]$, we have
 $$H_p\DC(M)(S)= 
\left\{\begin{array}{ll}
\ZZ[Iso(S,R)] & \text{if } p=0 \text{ and } S \simeq R \\ 
0 & \text{otherwise.}\\
\end{array}\right.
 $$
\end{prop}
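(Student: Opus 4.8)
\section*{Proof proposal}

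The plan is to imitate the proof of Proposition~\ref{PropRepre} essentially verbatim, with the corolla $C_\ell$ there replaced by the fixed tree $S$ here, and with $\beta$ there replaced by the ``coefficient'' morphism coming from $M$. Since $M(T)=\ZZ[\AA(T,R)]$ is free abelian, by linearity a spanning set for $\DC_p(M)(S)$ consists of the $H_S$-classes of quadruples $(\alpha\colon S\mono T,\, d\mid e,\, \psi)$ with $\psi\colon T\mono R$ a \emph{single} morphism of $\AA$, where $d$ enumerates the inner edges of $S$ and $e$ enumerates the inner edges of $T$ outside the image of $\alpha$. The key observation is that the composite $\gamma:=\psi\alpha\colon S\to R$ in $\AA$ is preserved by the whole structure: it is unchanged by the $H_S$-action (an isomorphism $\theta\colon T\isomap T'$ over $S$ replaces $\psi$ by $\psi\theta^{-1}$ and $\alpha$ by $\theta\alpha$, while permutations of $d$ and $e$ touch neither $\alpha$ nor $\psi$), and it is unchanged by each face operator occurring in $\partial_{ext}$, since $\partial_{e_i}^*\psi=\psi\circ\partial_{e_i}$ and $\alpha=\partial_{e_i}\circ\partial_{e_i}^*(\alpha)$. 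As $M$ is concentrated in degree $0$, only $\partial_{ext}$ is relevant; hence the complex $(\DC_*(M)(S),\partial_{ext})$ splits as a direct sum, over morphisms $\gamma\in\AA(S,R)$, of subcomplexes $\DC_*(M)(S)_\gamma$.

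Next I would analyze the summand attached to a fixed $\gamma\in\AA(S,R)$. Factoring $\gamma$ as an isomorphism followed by an inner face, it induces an injection $E(S)\hookrightarrow E(R)$; let $F$ be the complement of the image, so $|F|=|E(R)|-|E(S)|$. Up to isomorphism over $S$, a factorization $S\stackrel{\alpha}\mono T\stackrel{\psi}\mono R$ of $\gamma$ is the same as a choice of subset $F'\subseteq F$ (with $T$ obtained from $R$ by contracting the edges of $F\setminus F'$, and $\alpha$ contracting the edges of $F'$), and then $e$ is exactly an enumeration of $F'$, so $p=|F'|$. Fixing a linear order on $F$, a generator of $\DC_p(M)(S)_\gamma$ is therefore (up to sign) a $p$-element subset of $F$, and $\partial_{ext}$ is, up to the global sign $(-1)^{|E(S)|}$, the alternating sum of the operators deleting one element. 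In other words, $\DC_*(M)(S)_\gamma$ is, up to that sign and a degree shift by one, the augmented simplicial chain complex of the simplex on the vertex set $F$.

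Consequently, exactly as in Proposition~\ref{PropRepre}: if $\gamma$ is not an isomorphism then $F\neq\emptyset$ and $\DC_*(M)(S)_\gamma$ is acyclic, with contracting homotopy $h$ sending a sorted generator $e_0<\dots<e_{p-1}$ to $0$ if its last entry is the maximal element $m$ of $F$ and to $(-1)^{p+1}(e_0,\dots,e_{p-1},m)$ otherwise; while if $\gamma$ is an isomorphism then $F=\emptyset$, the tree $T\cong R$ and the index $p=0$ are forced, and $\DC_*(M)(S)_\gamma=\ZZ$ concentrated in degree $0$. Summing over $\gamma$ then yields $H_p\DC(M)(S)=0$ for $p\neq0$ and $H_0\DC(M)(S)=\bigoplus_{\gamma\in\mathrm{Iso}(S,R)}\ZZ=\ZZ[\mathrm{Iso}(S,R)]$, which is $\ZZ[\mathrm{Iso}(S,R)]$ precisely when $S\simeq R$ and $0$ otherwise.

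I expect the only real friction to be bookkeeping rather than anything conceptual: keeping the sign conventions straight (the $(-1)^d$ in $\partial_{ext}$, the sign relations defining the $H_S$-coinvariants, the degree shift), and checking carefully that the splitting over $\gamma$ and the identification of $\DC_*(M)(S)_\gamma$ with the simplex complex are compatible with passage to $H_S$-coinvariants, in particular with the groupoid of isomorphisms over $S$ and with the reduction of $\gamma$ to an inner face. This is precisely the content of the paper's remark that the statement ``can be proven in the same way'' as Proposition~\ref{PropRepre}, so the task is to verify that the argument there survives the presence of the extra tree $S$ and the coefficient presheaf $M=\ZZ[\AA(-,R)]$.
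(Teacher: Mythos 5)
Your proposal is correct and follows exactly the route the paper intends: the paper only remarks that this proposition "can be proven in the same way" as Proposition~\ref{PropRepre}, and your argument is precisely that adaptation — splitting the complex over the composite $\gamma=\psi\alpha\in\AA(S,R)$, identifying each summand with the (shifted, augmented) simplex complex on the complementary edge set $F$, and contracting via a maximal element. The bookkeeping points you flag (signs, triviality of stabilizers under the $H_S$-action since $\psi$ is injective) are indeed the only details to verify, and they go through.
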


\begin{prop}
For $S \in \AA$ fixed, consider the category $S / \AA$ and its full subcategory $S /\!/ \AA$ on the non-isomorphisms $S \mono T$, i.e. maps of positive codimension.
Then for $M:\AAop \to Ab$ and its restriction to $S/\AA$, there is an isomorphism
$$H_*\DC(M)(S)\simeq H_{*}(S/\AA, S/\!/\AA ; M).$$
\end{prop}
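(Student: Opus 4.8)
The plan is to imitate the proof of Proposition~\ref{PropHomCat} for $\newDC$, replacing the fixed corolla $C_\ell$ by the fixed tree $S$, and keeping track of the extra enumeration $d$ of the inner edges of $S$ that is present in the definition of $\DC_*(M)(S)$. Concretely, I would introduce the double complex
\begin{equation*}
C_{p,n} = \Bigg( \bigoplus_{S \mono R \mono T_0 \mono \ldots \mono T_n} M(T_n) \Bigg)/\!\sim,
\end{equation*}
where $n \geq 0$, $p \geq 0$, the chain $S \mono R \mono T_0 \mono \ldots \mono T_n$ lies in $\AA$, the datum $d$ is a fixed enumeration of the inner edges of $S$, the datum $e = (e_0,\ldots,e_{p-1})$ enumerates the inner edges of $R$ not in the image of $S \mono R$, and we quotient by permutations of the $e_i$'s (with signs) together with the obvious isomorphisms replacing $R$ by an isomorphic $R'$ under $S$ and over $T_0$. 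The horizontal differential in the $p$-direction is the alternating sum $\sum_i (-1)^{d+i}\partial_i$ of the face maps contracting $e_i$ (exactly as in $\partial_{ext}$), and the vertical differential in the $n$-direction is the simplicial differential on the string $T_0 \mono \ldots \mono T_n$.

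As in the proof of Proposition~\ref{PropHomCat}, I would compute the homology of this double complex in two ways. Fixing $p$ and the extension $S \stackrel{e}{\mono} R$, the $n$-direction computes the homology of the comma category $R/\AA$ with coefficients in $M$; since the quotient is by a simple groupoid (see the Appendix), this contracts to $M(R)$ concentrated in degree $n=0$, so $H_n(C_{p,-}) = 0$ for $n>0$ and $H_0(C_{p,-}) = \DC_p(M)(S)$. Hence the total complex computes $H_*\DC(M)(S)$. Fixing instead $n$, we get a sum over strings $S \stackrel{\gamma}{\mono} T_0 \mono \ldots \mono T_n$ of the complexes computing $H_p\DC(\ZZ[\AA(-,T_0)])(S)$; by the preceding proposition this vanishes unless $\gamma$ is an isomorphism, and in degree $p=0$ it contributes $\bigoplus_{S \isomap T_0 \mono \ldots \mono T_n} M(T_n)$. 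So the spectral sequence with $E^1_{p,n} = H_p(C_{-,n})$ collapses onto the line $p=0$, and the $d^1$-differential there is the simplicial differential, so $(E^1_{0,n}, d^1)$ is exactly the complex $C_*(S/\AA, S/\!/\AA\,;M)$ computing the relative homology of the pair; note that the degree shift seen in Proposition~\ref{PropHomCat} disappears here because $\DC_*(M)(S)$ is indexed by $p \geq 0$ rather than $p \geq -1$.

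The only points requiring genuine care, rather than transcription, are bookkeeping ones. First, one must check that the sign conventions match: the factor $(-1)^d$ appearing throughout $\partial_{ext}$ must be inserted consistently in $C_{p,n}$ so that the two differentials still anti-commute and the identification of the collapsed spectral sequence with the relative complex is sign-correct; this is the main obstacle, though a routine one. Second, one must verify that the quotient defining $C_{p,n}$ really is by a simple groupoid in the sense of the Appendix, so that the vertical collapse $H_0(C_{p,-}) \cong \DC_p(M)(S)$ is valid — this is the same input used for $\newDC$ and goes through verbatim, since the groupoid acting only involves isomorphisms of $R$ fixing $S$ below and $T_0$ above. With these two checks in place, the two computations of the homology of the total complex of $C_{p,n}$ give the claimed isomorphism $H_*\DC(M)(S) \simeq H_*(S/\AA, S/\!/\AA\,;M)$, naturally in $S$.
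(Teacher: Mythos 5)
Your proposal is correct and takes essentially the same route the paper intends: the paper gives no separate argument here but simply asserts the proof is the same as that of Proposition~\ref{PropHomCat}, and your transcription carries this out faithfully, with the right adjustments (indexing from $p\geq 0$ so the degree shift disappears, the extra fixed enumeration $d$ with its sign, and the same simple-groupoid input for the vertical collapse). Nothing further is needed.
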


%

We next observe that our construction sends $\infty$-operads to $\infty$-cooperads (up to a suspension), as for to the usual operadic bar construction.

\begin{notation}\label{Not:Bar}
For $M : \AAop \to \Ch$ and $S$ a tree, we denote by $\Bar(M)(S)$ the total complex defined by 
$$\Bar_m(M)(S)= \bigoplus_{m=p+q+n} \DC_{p,n}(sM)(S)$$
where $q$ is the number of inner edges of $S$ and where $sM$ denotes the suspension of $M$, with the convention $sx \in (sM)_n$ for $x \in M_{n-1}$.
Notice that for a morphism $\gamma:S \to S'$ the induced map $\gamma_* :\Bar(M)(S) \to \Bar(M)(S')$ preserves the total degree.
\end{notation}

\begin{prop}\label{BarIsCoop}
 Let $M : \AAop \to \Ch$ be an $\infty$-operad. Then the functor $\Bar(M) : \AA \to \Ch$ is naturally an $\infty$-cooperad.
\end{prop}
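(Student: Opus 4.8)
The plan is to exhibit the comultiplication maps $\Delta_{S,R,a}\colon \Bar(M)(S\circ_a R)\to \Bar(M)(S)\otimes \Bar(M)(R)$ explicitly, check that they are natural and coassociative (so that $\Bar(M)$ is an $\infty$-precooperad), and then verify that each $\Delta$ is a quasi-isomorphism, which is the substantive part. The construction of $\Delta$ should be read off directly from the combinatorics of grafting: for a tree $U=S\circ_a R$, any extension $U\mono T$ in $\AA$ is a grafting $T'\circ_a T''$ of an extension $R\mono T''$ onto an extension $S\mono T'$ at the new inner edge $a$ (since morphisms in $\AA$ are root-preserving leaf-bijections, they cannot ``mix'' the two parts). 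Under this identification the inner edges of $T$ not in the image of $U$ split as those of $T'$ not in $S$, plus those of $T''$ not in $R$, plus no extra edge; while the inner edges of $U$ are those of $S$, those of $R$, and the edge $a$. So on a generator $(\alpha, d\mid e, x)$ of $\DC_p(sM)(S\circ_a R)$ one writes $T\cong T'\circ_a T''$, splits $e=e'\amalg e''$ and the ``$d$''-part as (edges of $S$)$\amalg$(edges of $R$)$\amalg\{a\}$, and sets $\Delta$ to send the generator to a signed sum over shuffles of $(e',d_S)$ against $(e'',d_R)$ of the tensor $(\alpha',d_S\mid e',x')\otimes(\alpha'',d_R\mid e'',x'')$, where $x=x'\otimes x''$ comes from applying the $\infty$-operad structure map $\theta\colon M(T)\to M(T')\otimes M(T'')$ (this is where the suspension and the shift in $\Bar$ are needed to make the Koszul signs and the extra edge $a$ bookkeeping work out; the single edge $a$ accounts for the $s$ and for the degree shift $+q$ in Notation~\ref{Not:Bar}). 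One then checks this is well-defined on $H_{S\circ_a R}$-coinvariants, compatible with $\partial_{ext}$ (the edge contractions of $T$ that lie in $T'$, resp.\ $T''$, match the differentials on the two tensor factors, and contracting $a$ itself does not occur since $a$ is in the image of $\alpha$) and with $\partial_{int}$ (immediate from naturality of $\theta$), and naturality in $S$ and $R$ (from the uniqueness of the grafting decomposition and the naturality square for $\theta$). Coassociativity of $\Delta$ follows from the associativity of the grafting operation on trees together with the associativity axiom for $\theta$, again after checking the shuffle signs agree.

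The main obstacle is showing each $\Delta_{S,R,a}$ is a quasi-isomorphism. Here I would argue via the first spectral sequence / the $\partial_{ext}$-homology, i.e.\ via the functor $H_*\DC(M)$, reducing to a statement about $H_*\DC$. Using the identification $H_*\DC(M)(S)\simeq H_*(S/\AA, S/\!/\AA; M)$ from the proposition above, and the decomposition of the overcategory $U/\AA$ for $U=S\circ_a R$ in terms of pairs of extensions $(S\mono T', R\mono T'')$ (i.e.\ $U/\AA$ is, up to the groupoid quotients, the product $(S/\AA)\times(R/\AA)$, with the subcategory $U/\!/\AA$ corresponding to the ``fat wedge'' of the two subcategories), one gets a Künneth-type identification; the $\infty$-operad hypothesis enters precisely because $\theta\colon M(T)\to M(T')\otimes M(T'')$ being a quasi-isomorphism is what lets us replace $M(T)$ by $M(T')\otimes M(T'')$ on the nose up to quasi-isomorphism. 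Concretely, I would filter $\DC_*(sM)(U)$ by, say, the number of $e''$-edges (the edges coming from the $R$-side), compute the associated graded using that $\theta$ is a quasi-isomorphism and that $\DC$ of a tensor product of representables behaves well, and identify the $E_1$ or $E_2$ page with the corresponding page for $\DC_*(sM)(S)\otimes\DC_*(sM)(R)$. Since everything is supported, for a fixed number of leaves, in a bounded strip ($-1\le p\le \ell-3$), convergence is not an issue and a quasi-isomorphism on the associated graded gives the claim.

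An alternative, more hands-on route for the quasi-isomorphism — which may in fact be the one the authors intend, given their emphasis on elementary methods — is to first treat the case $M=N(P)$ for a strict operad, where $\theta$ is an isomorphism and $\Delta$ is literally the classical bar-construction comultiplication (so $\Bar(M)$ is a strict cooperad and $\Delta$ is an isomorphism, hence trivially a quasi-isomorphism), and then bootstrap to general $\infty$-operads $M$ by a cell-attachment / skeletal induction: one uses that any $\infty$-operad is, up to levelwise quasi-isomorphism, built from representables, invokes Proposition~\ref{propbasicprop}(i),(ii),(iii) (invariance under quasi-isomorphism, compatibility with filtered colimits, and long exact sequences) applied levelwise, together with the fact that $\DC_*(-)(S)$ and the total-complex construction preserve quasi-isomorphisms, short exact sequences and filtered colimits. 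The point is that the class of presheaves $M$ for which $\Delta$ is a quasi-isomorphism is closed under these operations and contains the representables, hence contains all $\infty$-operads. Either way, the bulk of the work is the explicit but delicate verification that the signs in $\Delta$, coming from shuffling the $e$- and $d$-enumerations, are exactly the Koszul signs forced by the suspension and the degree conventions of Notation~\ref{Not:Bar}; I would isolate this as a sign lemma and treat the rest as formal.
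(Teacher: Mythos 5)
Your primary route is essentially the paper's own proof: the same key lemma that extensions $S\circ_a R \mono T$ in $\AA$ decompose uniquely (up to isomorphism) as graftings $T_S\circ_{a'}T_R$ of extensions of $S$ and of $R$, the same explicit $\Delta$ obtained by splitting $d=(a,d_S,d_R)$ and $e=(e_S,e_R)$ and applying the suspended structure map $\theta$ of $M$ with a Koszul sign, and the same reason why $\Delta$ is a quasi-isomorphism, namely that under this decomposition $\Delta$ is termwise built from the quasi-isomorphisms $\theta$, with boundedness in the strip $-1\le p\le \ell-3$ taking care of the passage to total complexes (your filtration argument makes explicit what the paper's Lemma on degrafting leaves implicit). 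Two small corrections: there is no sum over shuffles in the formula for $\Delta$ --- the splitting of the enumerations is canonical and any reordering contributes only a single Koszul sign, since permutations already act by signs in the coinvariants, so the only genuine sum is the Sweedler sum coming from $\theta(x)$ --- and the alternative ``bootstrap from representables'' should be dropped, because the statement concerns the structure map $\theta$ rather than a property of the underlying presheaf (representables are not $\infty$-operads, and skeleta or cell strata do not inherit $\theta$), while the direct argument requires no such induction.
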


The proof is based on the following lemmas.

\begin{lemma}
 Given a grafting $S \circ_a R$, there is a bijective correspondence between isomorphism classes of extensions
 $S \circ_a R \stackrel{\alpha}\mono T$ in $\AA$, and pairs of such $(S \stackrel{\alpha_S}\mono T_S, R \stackrel{\alpha_R}\mono T_R)$. 
 Under this correspondence, $T=T_S \circ_{a'} T_R$ where $a'=\alpha(a)$, and $codim(\alpha)= codim(\alpha_S)+codim(\alpha_R)$.
\end{lemma}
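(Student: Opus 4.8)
The plan is to unpack the combinatorics of grafting and inner faces in $\AA$. First I would observe that an extension $S \circ_a R \mono T$ in $\AA$ is, up to isomorphism, the datum of a tree $T$ together with a set of inner edges of $T$ whose contraction yields $S \circ_a R$; equivalently, $T$ is obtained from $S \circ_a R$ by ``expanding'' some of its vertices into trees. The key structural fact is that the edge $a$ of $S \circ_a R$ (the grafting edge) is an inner edge of $S \circ_a R$ that is \emph{not} contracted, since a morphism in $\AA$ is an inner face and never contracts $a$ by the very description of the factorisation through inner faces; more precisely, contracting $a$ would merge a vertex of $S$ with a vertex of $R$, and any such tree cannot be written as $S \circ_a R$. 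Thus the edge $a$ persists in $T$, and cutting $T$ along the (image of the) edge $a'=\alpha(a)$ splits $T$ uniquely into an ``upper'' subtree $T_R$ and a ``lower'' subtree $T_S$ with $T = T_S \circ_{a'} T_R$.

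Next I would define the two maps of the asserted bijection. Given $S\circ_a R\mono T$, restrict along the canonical subtree inclusions $S\mono S\circ_a R$ and $R\mono S\circ_a R$: since every inner edge of $S\circ_a R$ other than $a$ lies in exactly one of $S$ or $R$, and the edges of $T$ contracted by $\alpha$ likewise distribute over the two sides of the cut at $a'$, we obtain inner faces $S\mono T_S$ and $R\mono T_R$ (that these land in $\AA$, i.e. preserve root and leaves appropriately, is immediate from the fact that grafting glues a leaf of $S$ to the root of $R$). Conversely, given $(S\mono T_S, R\mono T_R)$, form $T_S\circ_{a'} T_R$ where $a'$ is the image of the leaf $a$ under $S\mono T_S$ (equivalently the image of the root of $R$); this is manifestly an inner face of $S\circ_a R$, and one checks the two constructions are mutually inverse on isomorphism classes, since a tree is determined up to isomorphism by its two halves once the cut edge is specified. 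The codimension statement is then just additivity of the number of contracted inner edges: the inner edges of $S\circ_a R$ are those of $S$, those of $R$, and $a$; the face $\alpha$ contracts none equal to $a$, so the set it contracts is the disjoint union of the set contracted by $\alpha_S$ and the set contracted by $\alpha_R$, whence $codim(\alpha)=codim(\alpha_S)+codim(\alpha_R)$.

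The main obstacle I anticipate is the careful bookkeeping of \emph{isomorphisms} rather than strict trees: I must check that the correspondence is well-defined on isomorphism classes and is a genuine bijection there, i.e. that an isomorphism $T\cong T'$ over $S\circ_a R$ induces compatible isomorphisms $T_S\cong T'_S$ and $T_R\cong T'_R$ over $S$ and $R$ respectively, and conversely that a pair of isomorphisms on the two halves (both over the respective fixed trees) glues to an isomorphism of the graftings. This is true because any automorphism of a grafted tree $T_S\circ_{a'}T_R$ fixing the grafting edge $a'$ (which it must, as $a'$ is characterised as the image of $a$) restricts to automorphisms of $T_S$ and $T_R$ separately — there is no ``mixing'' across the cut edge — and this restriction is compatible with the inner-face structure. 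I would phrase this via the observation already highlighted in the paper that a morphism $\alpha:C_\ell\mono T$ is the same as an enumeration of leaves, or here more simply that cutting along $a'$ is a functorial operation on the relevant comma categories. Once this compatibility is in place, the bijection on isomorphism classes and the codimension formula follow without further computation.
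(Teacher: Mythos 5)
Your argument is correct, but it reaches the decomposition by a different route than the paper. The paper's proof is a two-line appeal to a cited structural fact: every injective morphism in $\OOmega$ factors uniquely, up to isomorphism, as inner face maps followed by outer face maps, and applying this to the composites $S \mono S\circ_a R \stackrel{\alpha}{\mono} T$ and $R \mono S\circ_a R \stackrel{\alpha}{\mono} T$ immediately produces $S\mono T_S \mono T$ and $R \mono T_R \mono T$, from which the bijection, the identification $T=T_S\circ_{a'}T_R$ and the codimension count follow. You instead construct the decomposition by hand: locate the edge $a'=\alpha(a)$ in $T$, cut $T$ along it, observe that the inner edges of $T$ outside the image of $\alpha$ distribute over the two halves, exhibit grafting as the inverse construction, and verify compatibility with isomorphisms over $S\circ_a R$. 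In effect you re-prove, in this special case, the uniqueness of the inner/outer factorisation that the paper simply cites; this makes your proof longer but self-contained, and your explicit treatment of the isomorphism bookkeeping and of the codimension additivity is exactly what the paper leaves implicit in the phrase ``immediate consequence''. One phrasing quibble: your ``key structural fact'' that $\alpha$ ``never contracts $a$'' is stated with the direction of the morphism slightly muddled --- a map $S\circ_a R\mono T$ in $\AA$ contracts no edge of $S\circ_a R$ at all (it is an isomorphism followed by an inner face, hence injective on edges; the edges that get contracted are the inner edges of $T$ outside the image); the substantive point you need, namely that $a$ has a well-defined image $a'$ which is an inner edge of $T$, is automatic and your use of it afterwards is fine.
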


\begin{proof}
This is an immediate consequence of the fact that any injective morphism in $\OOmega$ factors uniquely (up to isomorphism) as a composition of inner face maps
followed by a composition of outer face maps (see~\cite{HM}). These factorisations are the dotted arrows in the following diagram:
\begin{equation*}
\xymatrix@M=6pt@R=14pt@C=14pt{
S \ar@{>->}[r] \ar@{>.>}[d] & S\circ_a R \ar@{>->}[d] & R \ar@{>->}[l] \ar@{>.>}[d]\\
T_S\ar@{>.>}[r] & T & T_R \ar@{>.>}[l]}
\end{equation*}  
Note that the horizontal maps are in $\OOmegao$ but not in $\AA$.
\end{proof}

A corollary of this lemma is the following.
\begin{lemma}\label{Lemm59}
Given a grafting $S \circ_a R$ as above, there is a canonical quasi-isomorphism:
 $$\Bigg( \prod_{S \mono T_S,d|e} M(T_S) \Bigg)^{inv} \otimes \Bigg( \prod_{R \mono T_R,d|e} M(T_R) \Bigg)^{inv}
 \isomap \Bigg( \prod_{S \circ_a R \mono T,d|e} M(T) \Bigg)^{inv}.$$ 
 Similarly, there is a quasi-isomorphism for the descriptions by coinvariants:
 $$\Bigg( \bigoplus_{S \mono T_S,d|e} M(T_S) \Bigg)_{coinv} \otimes \Bigg( \bigoplus_{R \mono T_R,d|e} M(T_R) \Bigg)_{coinv}
 \isomapgauche \Bigg( \bigoplus_{S \circ_a R \mono T,d|e} M(T) \Bigg)_{coinv}.$$ 
\end{lemma}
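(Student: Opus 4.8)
The plan is to use the previous lemma, which sets up a bijective correspondence between isomorphism classes of extensions $S\circ_a R\mono T$ and pairs $(S\mono T_S,\, R\mono T_R)$, with $T=T_S\circ_{a'}T_R$ and codimensions adding. First I would fix this correspondence and observe that, under it, the set of inner edges of $T$ not in the image of $S\circ_a R\mono T$ splits canonically as the disjoint union of the corresponding sets for $S\mono T_S$ and for $R\mono T_R$ (indeed $a'$ itself lies in the image of $S\circ_a R$, since $a$ is an inner edge of $S\circ_a R$, so no new edge is created by the grafting). Likewise the inner edges of $S\circ_a R$ split as those of $S$, those of $R$, and the edge $a$. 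Hence for a fixed such $T=T_S\circ_{a'}T_R$ there is a natural iso $M(T)\cong M(T_S)\otimes M(T_R)$ coming from the $\infty$-operad structure map $\theta_{T_S,T_R,a'}$ — and this is precisely where the hypothesis that $M$ is an $\infty$-operad (so $\theta$ is a quasi-isomorphism, not merely a map) enters.

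Next I would assemble these pointwise identifications into the claimed map. For the invariants version, I would define the map by sending $(\omega,\omega')$ to the family whose $(S\circ_a R\mono T,\,d|e)$-component is $\theta^{-1}$ — or rather the appropriate component of the quasi-isomorphism $\theta$ — applied after splitting $d$ as $(d_S, a, d_R)$ and $e$ as $(e_S, e_R)$ according to the decomposition of edges above, with a Koszul sign governing the shuffle of $(d_S,a,d_R\,|\,e_S,e_R)$ into the pair $((d_S|e_S),(d_R|e_R))$. One must check this is compatible with the groupoid actions being quotiented (resp. the subgroups over which one takes invariants): an isomorphism $T\isomap T''$ over $S\circ_a R$ corresponds by the lemma to a pair of isomorphisms $T_S\isomap T_S''$, $T_R\isomap T_R''$, so naturality of $\theta$ gives compatibility; the $\Sigma$-actions permuting the $e$'s and $d$'s split as block-permutation-plus-shuffle, which is exactly accounted for by the sign convention. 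Then, since the bijection of the lemma is a bijection on \emph{isomorphism classes} and $\theta$ is a quasi-isomorphism in each arity, the assembled map is a quasi-isomorphism: the source is, up to the shuffle reindexing, $\prod_{[T_S]}M(T_S)\otimes\prod_{[T_R]}M(T_R)\cong \prod_{[T_S],[T_R]}M(T_S)\otimes M(T_R)$ (the product-of-tensors versus tensor-of-products issue is harmless here because for fixed number of leaves everything lives in a bounded strip, cf. the convergence remark after Proposition~3.4, so $\prod$ may be replaced by $\oplus$), and this maps by $\bigotimes\theta$ to $\prod_{[T]}M(T)$. The coinvariants statement follows by the same computation with $\oplus$ in place of $\prod$, using the isomorphism $\rho$ of the appendix between the invariants and coinvariants descriptions to transport the map (this also explains why the arrow points the other way, $\isomapgauche$, since one naturally writes down the map out of the coinvariants on $T$ by \emph{restricting} along the splitting).

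\textbf{Main obstacle.} The routine part is the bookkeeping of which edges of $T$ go to $T_S$ versus $T_R$ and the attendant Koszul signs for reshuffling $(d\,|\,e)$; this is tedious but forced. The genuine point — and the step I expect to require the most care — is verifying that the map respects the groupoid quotient/invariants: one needs that every automorphism of (a representative of) the isomorphism class of $S\circ_a R\mono T$ arises from a pair of automorphisms of $S\mono T_S$ and $R\mono T_R$, equivalently that the groupoid of extensions of a grafting is equivalent to the product of the groupoids of extensions of the factors. This is essentially a strengthening of the cited lemma from a bijection on $\pi_0$ to an equivalence of groupoids, and it rests on the uniqueness-up-to-isomorphism of the inner-then-outer factorization in $\OOmega$ together with the fact that $a'$ is not an outer edge of either factor. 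Once that equivalence is in hand, naturality of $\theta$ does the rest and the quasi-isomorphism claim is immediate from $\theta$ being an arity-wise quasi-isomorphism.
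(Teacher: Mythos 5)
Your argument is correct and is essentially the paper's: the paper deduces this lemma directly from the preceding correspondence between (isomorphism classes of) extensions of $S\circ_a R$ and pairs of extensions of $S$ and $R$, with the componentwise comparison given by the structure map $\theta_{T_S,T_R,a'}$ and the splitting of edges, enumerations, signs and groupoid actions handled exactly as you describe (the automorphism issue you flag is settled by the simplicity of these groupoids, as in the appendix). One small point to clean up: $\theta$ has no inverse (it is only a quasi-isomorphism), so the canonical map is the one induced by $\theta$ itself out of the grafted-tree side --- as in your coinvariants description --- and the invariants form is obtained by transporting that map through the isomorphism $\rho$ of the appendix, not by applying $\theta^{-1}$ componentwise.
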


\bigskip

\begin{proof}[Proof of the proposition]
 Suppose $M$ is a linear $\infty$-operad. We define a map 
 $$\Delta: \Bar(M)(S\circ_a R) \to \Bar(M) (S) \otimes \Bar(M) (R),$$ coming from maps
 $$\DC_{p,n}(sM)(S\circ_a R) \to \bigoplus_{p_1+p_2=p, n_1+n_2=n+1} \DC_{p_1,n_1}(sM)(S) \otimes  \DC_{p_2,n_2} (sM)(R)$$
 as follows.
 An element of $\DC_{p,n}(sM)(S\circ_a R)$ can be represented as 
 $$(S \circ_a R \stackrel{\alpha}{\mono} T, d|e, sx)$$
 where $\alpha$ is a morphism of codimension ${p}$, while $d$ enumerates the inner edges of $S \circ_a R$
 and $e=(e_0, \ldots, e_{p-1})$ those of $T$ which do not lie in the image of $\alpha$, and finally $x \in M_{n-1}(T)$. 
 By permuting the sequence  $d$ (and adapting the sign if necessary), we may assume that $d$ enumerates $a$ first, then the edges of $S$,
 and those of $R$ last, as 
 $$d=(a,d_S,d_R).$$
 For factorisations $S \stackrel{\alpha_S}\mono T_S$ and $R \stackrel{\alpha_R}\mono T_R$ 
 as in the lemma, we can take $T_S$ and $T_R$ to be actual subtrees of $T$,
 and the $p_1$ inner edges of $T_S$ not in the image of $\alpha_S$ together with the $p_2$ inner edges of $T_R$ not in the image of $\alpha_R$
 are exactly the inner edges of $T$ not in the image of $\alpha$. 
 So, by permuting the $e_i$'s if necessary, we can assume that $e=(e_S,e_R)$ enumerates the inner edges of $T_S$ before those of $T_R$.
 
 The operad structure map $\Delta$ on $\Bar(M)$ is induced by the operad structure of $M$ (shifted by $1$) and the degrafting of $S$ and $R$: for an element $(\alpha, d|e, sx)$ in $\DC_{p,n}(sM)(S \circ_a R)$, 
 $$\Delta(\alpha, d|e, sx)=\sum \pm (\alpha_S, d_S|e_S, s\theta_{(1)}(x)) \otimes (\alpha_R, d_R|e_R, s\theta_{(2)}(x)) $$
 where $\pm$ is a Koszul sign explained below, and where (in Sweedler notation)
 $$\theta(x)=\sum  \theta_{(1)}(x) \otimes  \theta_{(2)}(x) \in M_{n_1-1}(S) \otimes M_{n_2-1}(R)$$
 is given by the operad structure of the presheaf $M$, with the degrees $n_1$ and $n_2$ satisfying $n-1=(n_1-1)+(n_2-1)$.
 The sign in the formula is defined as a Koszul sign. Let us first define the map $\tilde\theta: sM \to sM \otimes sM$ as $(s \otimes s) \theta s^{-1}$. Thus $\tilde\theta$ is a map of degree $+1$ and  $\tilde\theta(sx)= \sum (-1)^{n_1-1}  s\theta_{(1)}(x) \otimes  s\theta_{(2)}(x)$ with the same notations as above. The map $\Delta$ is the composition of applying $\tilde\theta$ on $sx$ followed by degrafting the trees $S$ and $R$ (which involves permuting $d_R$ with $e_S$) followed by permuting $s\theta_{(1)}(x)$ with $(\alpha_R, d_R|e_R)$ of degree $d_R+e_R$. The obtained Koszul sign is then 
 $$1+d+e+e_Sd_R+(e_R+d_R)n_1+n_1-1.$$
 Another way to understand this sign is to write
 $$\Delta(\alpha,(a,d_S,d_R)|(e_s,e_R),sx)= \pm tw((\alpha_S,d_S|e_S),(\alpha_R,d_R|e_R), \tilde\theta(sx))$$
 where $\pm$ is the Koszul sign $(-1)^{1+d_S+e_S+d_R+e_R+e_Sd_R}$ and $tw$ moves the first component of $\tilde\theta(sx)$ in the right place (with the appropriate sign) in order for the result to land in the appropriate tensor product.

 We leave it to the reader to check that $\Delta$ is well-defined and satisfies the required naturality and coassociativity conditions, keeping in mind that $\tilde\theta$ is anti-associative.
 Moreover, $\Delta$ is compatible with the differentials; i.e., using the Koszul sign convention, the following diagram commutes,
\begin{equation*}
\xymatrix@C=40pt{
\DC_{p}(sM)(S \circ_a R)\ar[rr]^{\Delta \quad \quad \ \quad \quad } \ar[d]^{\partial_{ext}} 
&& \displaystyle{\bigoplus_{p_1+p_2=p}} \DC_{p_1}(sM)(S) \otimes \DC_{p_2}(M)(R) \ar[d]^{\partial}  \\ 
\DC_{p-1}(sM)(S \circ_{a} R)\ar[rr]^{\Delta \quad \quad \ \quad \quad } 
&&\displaystyle{\bigoplus_{r_1+r_2=p-1}} \DC_{r_1}(sM)(S) \otimes \DC_{r_2}(M)(R)
}
\end{equation*}  
  where the vertical map on the right hand side is
 $\partial_{ext} \otimes id +(-1)^{d_S+e_S +n_1} id \otimes \partial_{ext}$. The same diagram also commutes for the differential $\partial_{int}$.
 Moreover, using the lemma, one can easily check that $\Delta$ is a quasi-isomorphism if $\theta$ is.
\end{proof}

%

\section{Cobar construction}

Let $Y : \AA \to \Ch$ be a covariant functor. We will define a contravariant
functor $\DC^\vee : \AAop \to \Ch$, dual in a sense to the construction of $\DC(M)$ above.
For an object $R$ of $\AA$, we define
$$\DC^\vee_q(Y)(R) = \Bigg( \prod_{\alpha : R \mono S, b,d} Y(S) \Bigg)^{inv},$$
where  the product ranges over morphisms $R \stackrel{\alpha}\mono S$ of codimension $q$, enumerations $b=(b_0, \ldots, b_{r-1})$ of inner edges of $R$, and enumerations $d=(d_0, \ldots, d_{q-1})$ of the inner edges of $S$ that do not belong to the image of $\alpha$.
The invariance is for isomorphisms under $R$,
\begin{equation*}
\xymatrix@M=10pt{
R \ar@{>->}[dr]_{\alpha'} \ar@{>->}[r]^{\alpha} & S \ar[d]^\theta_\wr \\
& S' }
\end{equation*}  
and permutations of the $b_i$'s and $d_j$'s. 
Thus an element $\omega \in \DC^\vee_q(Y)$ assigns to every $\alpha, b,d$ as above
an element 
$$\omega_{\alpha,b|d} \in Y(S),$$
subject to the  following invariance conditions:
\begin{itemize}
 \item for an isomorphism $\theta$ as above,
 $$\theta_*(\omega_{\alpha,b|d})=\omega_{\theta\alpha,b|\theta d}$$
 \item for $\sigma \in \Sigma_{r}$ and the composition $b\sigma:\{0, \ldots, r-1\} \to E(R)$,
 $$\omega_{\alpha,b|d}=(-1)^{|\sigma|} \omega_{\alpha,b\sigma|d}$$
 \item for $\tau \in \Sigma_{q}$ and $d\tau:\{0, \ldots, q-1\} \to E(S)$,
 $$\omega_{\alpha,b|d}=(-1)^{|\tau|} \omega_{\alpha,b|d\tau}$$
\end{itemize}
The first differential 
$\partial_{ext} : \DC^\vee_q(Y)(R) \to \DC^\vee_{q+1}(Y)(R)$
is defined by 
$$(\partial_{ext}\omega)_{\alpha, b|d} = \sum_{j=0}^{q-1} (-1)^{q-1-j} (\partial_{d_j})_*
(\omega_{\partial_{d_j}^*(\alpha),b|(d_0 \ldots \hat{d_j} \ldots d_{q-1})})$$
where $\partial_{d_j}^*(\alpha)$ is defined by 
\begin{equation*}
\xymatrix@M=10pt{
R \ar@{>->}[dr]_{\partial_{d_j}^*(\alpha)} \ar@{>->}[r]^{\alpha} & S \\
 & \ar[u]^{\partial_{d_j}} \partial_{d_j} S }
\end{equation*}  
as before.
One readily checks that $\partial_{ext}\omega$ is invariant whenever $\omega$ is,
and that $\partial_{ext}\partial_{ext}\omega=0$.

Just like for the bar construction, $\DC^\vee_q(Y)(R)$ is actually a double complex, with an internal differential $\partial_{int}$ defined by
$$(\partial_{int}\omega)_{\alpha, b|d} = (-1)^{b+d}\partial_Y(\omega_{\alpha, b|d}).$$

\begin{remark}
 
For $\DC^\vee$ it is easy to define the differential, but more involved to define functoriality.
Thus as in the case of the $\DC$ construction, we first give another description by coinvariants,
$$\bar\DC^\vee_p(Y)(R)= \Bigg( \bigoplus_{\alpha : R \mono S, b,d} Y(S) \Bigg)_{coinv},$$
isomorphic to $\DC^\vee_p(Y)(R)$.
From this point of view, the functoriality is easier to define.
Let $\beta: R' \mono R$ be a morphism of codimension $c$.
For a map $\alpha:R \to S$, and $b,d,y$ as above, let us define
$$\bar\beta^*(\alpha:R\mono S, b|d,y)=(-1)^\tau (\alpha\beta, b'|\alpha(b'')d,y)$$
where $b'$ are the edges in the image of $\beta$, which we view as edges in $R'$, and $b''$  the edges of $R$ contracted by $\beta$, while $\tau$ the permutation sending the enumeration $b$ to the concatenation of $b'$ and $b''$.
\end{remark}

We now describe the functoriality in terms of invariants, that is the map $\beta^*$ which fits into the following commutative square, where $\rho$ is the isomorphism of the appendix:
\begin{equation*}
\xymatrix@M=10pt{
R '\ar@{->}[d]_\beta
& \bar\DC^\vee_{q+c}(Y)(R')  \ar@{->}[r]^{\rho}_\sim 
& \DC^\vee_{q+c}(Y)(R')    \\
R&  \bar\DC_{q}^\vee(Y)(R) \ar@{->}[u]_{\bar\beta^*}
\ar[r]^\rho_\sim & \DC^\vee_{q}(Y)(R)\ar@{-->}[u]_{\beta^*}.}
\end{equation*} 
For $\omega \in \DC^\vee_q(Y)(R)$ and $\alpha' : R' \mono S$, $b'$ denoting the edges in $R'$ and $d$ those in $S$ not in the image of $\alpha'$, 
$$\beta^*(\omega)_{\alpha', b'|d}=0$$
if $\alpha': R' \to S$ does not factor through $\beta: R' \to R$.
If it does, we can write $\alpha'=\alpha \beta$ for a unique $\alpha$,
where (up to isomorphism) $\alpha$ contracts $q$ edges among the $d_j$'s, and we can split the sequence $d$ into two sequences, first the edges of $S$ in the image of $R$, followed by the remaining edges of $S$ (in order of occurrence in $d$). In other words, 
$$d\tau=\alpha(b)d'$$
for a permutation $\tau \in \Sigma_{q+c}$. Then
$$\beta^*(\omega)_{\alpha', b'|d}= (-1)^ {|\tau|} \omega_{\alpha, \beta(b) \alpha(b)|d'}.$$

A direct verification shows:
\begin{itemize}
  \item The map $\beta^*$ commutes with $\partial_{ext}$.
  \item For a composition $R'' \stackrel{\beta'}\mono R' \stackrel{\beta}\mono R$, we have $\beta'^* (\beta)^*= (\beta \beta' )^*$.
 \end{itemize}

\begin{remark} 
The differentials of $\bar\DC^\vee(Y)$ can be described in terms of coinvariants as follows:
For $(\alpha:R\mono S, b|d,y)$ where $y \in Y(S)$,
$$\bar \partial_{ext}(\alpha:R\mono S, b|d,y)= \int_{f:S\mono S'} (f\alpha:R \mono S', b|df, f_*y)$$
where the integral is over codimension $1$ extensions $f:S\mono S'$, where $f$ also denotes the only new edge, and $f_* y$ is the effect of the covariant functoriality $Y(S) \to Y(S')$ on $y \in Y(S)$.
This renders the following diagram commutative:
\begin{equation*}
\xymatrix@M=10pt{
\bar\DC^\vee(Y)(R)  \ar@{->}[r]^{\rho}_\sim \ar@{->}[d]_{\bar \partial_{ext}}
& \DC^\vee(Y)(R) \ar@{->}[d]^{ \partial_{ext}}   \\
  \bar\DC^\vee(Y)(R) 
\ar[r]^\rho_\sim & \DC^\vee(Y)(R).}
\end{equation*} 
The second differential is given by 
$$\bar \partial_{int}(\alpha:R\mono S, b|d,y)=(-1)^{b+d}(\alpha:R\mono S, b|d,\partial_Y y).$$

\end{remark}

\begin{notation}\label{Not:CoBar}
 For $Y : \AA \to \Ch$ and $R$ a tree, we denote by $\coBar(Y)(R)$ the total complex defined by
 $$\coBar_m(Y)(R) = \bigoplus_{m=n-(r+q)} \DC^\vee_{q,n}(s^{-1}Y)(R)$$
 where $r$ denotes the number of inner edges of $R$.
\end{notation}

The following proposition is the dual of Proposition~\ref{BarIsCoop}.

\begin{prop}
If $Y : \AA \to \Ch$ is a linear $\infty$-cooperad, then $\coBar(Y): \AAop \to \Ch$ has the structure of a linear $\infty$-operad. 
\end{prop}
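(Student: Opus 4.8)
The plan is to dualize the proof of Proposition~\ref{BarIsCoop} line by line, exploiting the fact that the cobar construction $\DC^\vee$ is built from invariants (products) rather than coinvariants (sums), so that the grafting decomposition lemma and its corollary, Lemma~\ref{Lemm59}, apply in their invariant form. First I would record the appropriate version of the grafting lemma: given a grafting $R \circ_a Q$ of trees, isomorphism classes of codimension-$q$ extensions $R \circ_a Q \mono S$ are in bijective correspondence with pairs of extensions $R \mono S_R$ of codimension $q_1$ and $Q \mono S_Q$ of codimension $q_2$ with $q_1 + q_2 = q$, under which $S = S_R \circ_{a'} S_Q$ with $a' = \alpha(a)$. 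This is literally the same lemma used in Section~\ref{S:DCold}, applied with the roles of source and target swapped, and it is already a consequence of the unique factorization of injective morphisms in $\OOmega$.

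Next I would define the operad structure map
$$\theta = \theta_{R,Q,a}: \coBar(Y)(R) \otimes \coBar(Y)(Q) \to \coBar(Y)(R \circ_a Q),$$
coming from maps $\DC^\vee_{q_1,n_1}(s^{-1}Y)(R) \otimes \DC^\vee_{q_2,n_2}(s^{-1}Y)(Q) \to \DC^\vee_{q_1+q_2,\, n_1+n_2-1}(s^{-1}Y)(R\circ_a Q)$. Concretely, given $\omega \in \DC^\vee_{q_1}(s^{-1}Y)(R)$ and $\eta \in \DC^\vee_{q_2}(s^{-1}Y)(Q)$ and an extension $R \circ_a Q \stackrel{\alpha}\mono S$ together with enumerations $b$ of the inner edges of $R \circ_a Q$ (which include $a$) and $d$ of the inner edges of $S$ not in the image of $\alpha$, I factor $\alpha$ through the lemma as $(\alpha_R: R \mono S_R,\ \alpha_Q: Q \mono S_Q)$, split $b$ (after a permutation, with the induced Koszul sign) as $b = (b_R, a, b_Q)$ enumerating the edges of $R$, then $a$, then those of $Q$, and split $d = (d_R, d_Q)$ enumerating the edges of $S_R$ not hit by $\alpha_R$ before those of $S_Q$ not hit by $\alpha_Q$. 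Then I set $\theta(\omega \otimes \eta)_{\alpha, b|d}$ to be the appropriate Koszul sign times $\nabla\big( s^{-1}\otimes s^{-1} \mapsto s^{-1}\big)$ applied to the cooperad comultiplication $\Delta_{S_R, S_Q, a'}\big(\,\cdot\,\big)$ evaluated against $\omega_{\alpha_R, b_R|d_R} \otimes \eta_{\alpha_Q, b_Q|d_Q} \in Y(S_R) \otimes Y(S_Q)$ — that is, I use the structure map $\Delta: Y(S_R \circ_{a'} S_Q) \to Y(S_R) \otimes Y(S_Q)$ of the $\infty$-cooperad $Y$ in the contravariant direction, just as the bar construction used the covariant operad structure of $M$. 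The suspension bookkeeping matches Notation~\ref{Not:CoBar}: the degree shift $-1$ in $n_1 + n_2 - 1$ corresponds to the degree $+1$ twist $\widetilde\Delta = (s^{-1}\otimes s^{-1})\,\Delta\, s$ and is the exact mirror of the map $\tilde\theta$ of degree $+1$ in the proof of Proposition~\ref{BarIsCoop}.

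Then I would verify the three required properties. (a) \emph{Well-definedness}: the output is independent of the chosen representatives and the chosen permutations splitting $b$ and $d$ — this is the same verification as for $\Delta$ on $\Bar(M)$, using the invariance conditions listed after the definition of $\DC^\vee_q$ and the naturality of $\Delta_Y$; in the invariant picture it reduces to checking that the assignment lands in $\big(\prod Y(S)\big)^{inv}$, which follows because the groupoid of extensions of $R\circ_a Q$ decomposes according to the lemma. (b) \emph{Naturality and (co/)associativity}: naturality in $R$ and $Q$ follows because the functoriality maps $\beta^*$ were defined precisely so that they commute with the grafting decomposition (the commuting square with $\rho$), and associativity of $\theta$ follows from the coassociativity of $\Delta_Y$ together with the anti-associativity of $\widetilde\Delta$ — the signs work out by the same Koszul computation as in the bar case. (c) \emph{Compatibility with differentials}: I would check that $\theta$ intertwines $\partial_{ext}$ on $\coBar(Y)(R\circ_a Q)$ with $\partial_{ext}\otimes \mathrm{id} + (-1)^{\,?}\,\mathrm{id}\otimes\partial_{ext}$, and similarly for $\partial_{int}$ — dual to the commuting square displayed in the proof of Proposition~\ref{BarIsCoop}; the $\partial_{ext}$ case uses that contracting an edge of $S$ that lies over $S_R$ (resp. $S_Q$) is visible on the $R$-factor (resp. $Q$-factor) of the decomposition, with the two cases contributing the two terms. (d) \emph{Quasi-isomorphism}: finally, when $\Delta_Y$ is a quasi-isomorphism, I invoke the invariant form of Lemma~\ref{Lemm59} (the canonical quasi-isomorphism for products, which holds here because $Y$ takes values in chain complexes over $\ZZ$ and the relevant groupoid is simple, cf. the Appendix) to conclude that $\theta$ is a quasi-isomorphism, hence $\coBar(Y)$ is a linear $\infty$-operad. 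I expect the main obstacle to be purely the sign bookkeeping in step (c)–(d): matching the Koszul signs coming from the suspension $s^{-1}$, the permutations of $b$ and $d$, and the $\partial_{ext}$ differential, exactly as the corresponding signs in Proposition~\ref{BarIsCoop} required the careful $tw$-map formulation — everything structural is a formal dualization, but getting every $(-1)$ right is where the real work lies, and as there I would leave the detailed verification to the reader.
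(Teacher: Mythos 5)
There is a genuine gap, and it lies in the very first definition rather than in the sign bookkeeping you defer. In this paper a linear $\infty$-(pre)operad structure on a contravariant functor $N:\AAop\to\Ch$ consists of maps $N(S\circ_a R)\to N(S)\otimes N(R)$ (Segal-type decomposition maps, required to be quasi-isomorphisms); it does \emph{not} consist of composition maps $N(S)\otimes N(R)\to N(S\circ_a R)$. Your proposed structure map $\theta_{R,Q,a}:\coBar(Y)(R)\otimes\coBar(Y)(Q)\to\coBar(Y)(R\circ_a Q)$ therefore goes in the wrong direction for the statement being proved. Worse, the recipe for its components cannot be carried out at the chain level: to produce $\theta(\omega\otimes\eta)_{\alpha,b|d}\in Y(S)$ with $S\cong S_R\circ_{a'}S_Q$ you would need a map $Y(S_R)\otimes Y(S_Q)\to Y(S_R\circ_{a'}S_Q)$, but the $\infty$-cooperad $Y$ only supplies $\Delta: Y(S_R\circ_{a'}S_Q)\to Y(S_R)\otimes Y(S_Q)$, and ``using $\Delta$ in the contravariant direction'' is not an operation: $\Delta$ is merely a quasi-isomorphism, not invertible, so no well-defined chain map is obtained. (In the bar case the analogous step worked precisely because the operad structure of $M$ in this paper already points from the grafted tree to the tensor product.)

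The paper's proof goes the other way, exactly parallel to Proposition~\ref{BarIsCoop}: working with the coinvariant description of $\DC^\vee$, an element of $\coBar(Y)(S\circ_a R)$ is represented by $(S\circ_a R\mono T,\ ad_Sd_R|e_Se_R,\ s^{-1}y)$ with $y\in Y(T)$ and $T\cong T_S\circ T_R$ by the grafting lemma; one applies the (shifted) cooperad map of $Y$, namely $(s^{-1}\otimes s^{-1})\theta s$, to $s^{-1}y$ and degrafts, landing in $\coBar(Y)(S)\otimes\coBar(Y)(R)$ with the explicit sign $(-1)^{(d_S+e_S)(d_R+e_R)+d_Re_S}$. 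Your remaining checklist (naturality, compatibility of $\partial_{ext}$ and $\partial_{int}$, and the quasi-isomorphism property via Lemma~\ref{Lemm59} and the Appendix) is the right one and would go through once the structure map is set up in this direction, but as written the central construction does not exist, so the argument does not establish the proposition.
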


\begin{proof}
We define the structure with the point of view of coinvariants, to make it closer to the case of the bar construction.
We define a map 
 $$\Delta:\coBar(Y)(S\circ_a R) \to \coBar(Y)(S) \otimes \coBar(Y)(R),$$ coming from maps
 $$\bar\DC^\vee_{q,n}(s^{-1}Y)(S\circ_a R) \to \bigoplus_{q_1+q_2=q, n_1+n_2=n-1} \bar\DC^\vee_{q_1,n_1}(s^{-1}Y)(S) \otimes  \bar\DC^\vee_{q_2,n_2} (s^{-1}Y)(R)$$
 as follows.
 An element of $\bar\DC^\vee_{q,n}(s^{-1}Y)(S\circ_a R)$ can be represented as 
 $$(S \circ_a R \stackrel{\alpha}{\mono} T, ad_Sd_R|e_Se_R, s^{-1}y).$$

 The operad structure map $\Delta$ on $\coBar(Y)$ is induced by the cooperad structure of $Y$ (shifted by $-1$) and the degrafting of $S$ and $R$: for an element $(\alpha, ad_Sd_R|e_Se_R, s^{-1}y)$ in $\bar\DC^\vee(s^{-1}Y)(S \circ_a R)$, 
 $$\Delta(\alpha,  ad_Sd_R|e_Se_R, s^{-1}y)=\pm tw(\alpha_S, d_S|e_S,\alpha_R, d_R|e_R, \hat\theta(s^{-1}y))$$
 where $\pm$ is a sign explained below, $\hat\theta$ is the map of degree $-1$ defined by the composition $(s^{-1} \otimes s^{-1})\theta s$, and where $tw$ is the map moving the first component of $\hat\theta(s^{-1}y)$ in the right place and then cutting the tensor product.
 The sign $\pm$ is defined by $$(-1)^{(d_S+e_S)(d_R+e_R)+d_Re_S}.$$ 
One can easily check that this map $\Delta$ is natural in $R$ and $S$, and is coassociative, and that the differentials $\bar\partial_{ext}$ and $\bar\partial_{int}$ are coderivations with respect to $\Delta$. Moreover, $\Delta$ is a quasi-isomorphism if $\theta$ is.
\end{proof}

\begin{remark}
Note that the sign for $\Delta$ does not seem to be a Koszul sign. However it fits the properties stated at the end of the proof and the adjunction in the next section.
\end{remark}

%

%

\section{Twisting morphisms}

We develop in our context the theory of twisting morphisms, to obtain an adjunction between bar and cobar.
See \cite{LV} for an account in the usual cases of algebras and linear operads.

The first goal is to show there is a bijective correspondance between maps
$$\varphi : \bar \coBar(Y) \to M$$
in the category of functors from $\AAop$ to $\Ch$
and 
$$\psi : Y \to \bar \Bar(M) $$
in the category of functors from $\AA$ to $\Ch$
and ``twisting cocycles'', which can be either written as
$$\hat\tau: Y(S) \to \bigg(\prod_b M(S)\bigg)^{inv}$$
or as 
$$ \tau : \bigg(\bigoplus_b Y(S)\bigg)_{coinv} \to M(S).$$
Here $b$ enumerates edges of $S$, and changes in $b$ act by sign representation as before.
In a first step, we ignore differentials (so take everything with values in graded abelian groups, for example).

Let us look at $\varphi$, given by
$$\varphi_R: 
\Bigg( \bigoplus_{\beta : R \mono S, b|d} s^{-1}Y(S) \Bigg)_{coinv}
\to M(R)$$
natural in $R$.
An element $(\beta, b |d,s^{-1}y)$ on the left is the restriction along $\beta$ of an element where $\beta$ is the identity:
$$(\beta, b |d,s^{-1}y)=\bar \beta^* (id : S\to S, bd|-,s^{-1}y)$$
so
$$\varphi_R(\beta, b |d,s^{-1}y)= \bar\beta^* \varphi_S (id_S, bd|-,s^{-1}y).$$
Hence $\varphi$ is completely determined by what it does on elements of the form $(id_S, b|-,s^{-1}y)$,
which gives the map 
$$ \tau_S : \bigg(\bigoplus_b Y(S) \bigg)_{coinv}
\to M(S)$$
defined by $\tau_S(b,y)=\varphi_S(id_S,b|-, s^{-1}y)$.
Conversely, the map $\varphi$ can be recovered as  $\varphi_R(\beta,b|d, s^{-1}y)=\beta^*\tau_S(bd,y)$.

The same applies to $\psi : Y \to
 \bar\DC(sM)$, given by
 $$\psi_S : Y(S) \to \Bigg(\prod_{\alpha : S \mono T, d|e} sM(T)\Bigg)^{inv}.$$
For an element $\omega$ on the right,
$$ \omega_{\alpha,d|e}= \bar \alpha_*(\omega_{id_T, de|-})$$
so
$$\psi_S(y)_{\alpha,d|e}=\psi_T(\alpha_*y)_{id_T, de|-}$$
by naturality.
Thus $\psi$ is completely determined by the values $\psi_T(y)_{id_T, de|-}$
which gives the map
$$\hat\tau_S : Y(S) \to \bigg(\prod_e M(S)\bigg)^{inv},$$
defined by $\hat\tau_S(y)(e)=s^{-1}\psi_S(y)_{id_S, e|-}$,
another form of the same twisting cocycle. Conversely, $\psi$ can be recovered as
$\psi_S(y)_{\alpha,d|e}=s\hat\tau_T(\alpha_*y)(de)$.

Now let us involve the differentials and find a condition on $\tau$, respectively $\hat\tau$.
We need to check that the condition on $ \tau$ imposed by $\varphi$ preserving the differential is the same as the one on $\hat\tau$ imposed by $\psi$ preserving the differential.
Let us look at $\varphi$ first.
\begin{equation*}
\xymatrix@M=10pt@C=80pt{
\bigg( \bigoplus\limits_{\beta : R \mono S, b|d} s^{-1}Y(S) \bigg)_{coinv}
  \ar@{->}[r]^{\varphi_R} \ar@{->}[d]_{\bar \partial_{ext} + \bar \partial_{int}}
& M(R) \ar@{->}[d]^{\partial_M}   \\
\bigg( \bigoplus\limits_{\beta : R \mono S, b|d} s^{-1}Y(S) \bigg)_{coinv} 
\ar[r]^{\varphi_R} & M(R).}
\end{equation*} 
By naturality, it is enough to describe the condition for this diagram to commute
for $\beta=id$:
$$\partial_M \varphi_R(id_R,b|-,s^{-1}y) = \varphi_R (\bar \partial_{ext} (id_R, b|-,s^{-1}y)) +  \varphi_R( \partial_{int} (id_R,b|-, s^{-1}y)).$$
In other words,
$$\partial_M  \tau_R (b,y)
= 
\int_{d:R\mono R'} \varphi_R(d:R\mono R', b|d, d_* s^{-1}y) +(-1)^b  \tau_R(b; s\partial_{s^{-1}Y} s^{-1}y),
$$ 
or equivalently,
$$ \int_{d:R\mono R'} d^* \tau_{R'}(bd, d_* y)= \partial_M \tau_R (b,y) + (-1)^{b} \tau_R(b; \partial_Y y),
 $$
 where $d$ denotes at the same time the inclusion morphism $R \mono R'$ and the added edge to obtain $R'$ from $R$. This equation can be seen as a Maurer-Cartan equation.
 
 Now let us look at $\psi$:
\begin{equation*}
\xymatrix@M=10pt@C=80pt{
Y(S)   \ar@{->}[r]^{\psi_S} \ar@{->}[d]^{\partial_Y} & 
\bigg( \prod\limits_{\alpha : S \mono T, d|e} sM(T) \bigg)^{inv}
 \ar@{->}[d]_{\bar \partial_{ext} + \bar \partial_{int}} \\
Y(S)  \ar@{->}[r]^{\psi_S} &
 \bigg( \prod\limits_{\alpha : S \mono T, d|e} sM(T) \bigg)^{inv}.}
\end{equation*} 
Again by naturality, $\psi_S \partial_Y$ and $(\bar \partial_{ext} + \bar \partial_{int}) \psi_S$  are completely determined by their values on elements $(id,d|-)$.
So the commutativity of the diagram is equivalent to the condition that for all $y \in Y(S)$
$$ \psi_S(\partial_Y y)_{(id_S,d|-)}
=(\bar \partial_{ext} \psi_S (y))_{(id_S,d|-)} + \bar \partial_{int} \psi_S(y)_{(id_S,d|-)},
$$ 
$$= (-1)^d \int_{f:S\mono S'} f^* (\psi_S(y)_{(f:S\to S',d|f)}) +  (-1)^d \partial_{sM}(\psi_S(y)_{(id_S,d|-)}) $$ 
$$= (-1)^d \int_{f:S\mono S'} f^* s(\hat\tau_{S'}(f_*y)(df)) +  (-1)^{d+1} s \partial_{M}(\hat\tau(y)(d)) $$ 
or equivalently,
$$ \int_{f:S\mono S'} f^* (\hat\tau_{S'}(f_*y)(df))= \partial_{M}(\hat\tau_S(y)(d))  + (-1)^d \hat\tau_S(\partial_Y y)(d),
 $$
 which is the same Maurer-Cartan equation as above. 
 This proves the following result.
 
\begin{prop}\label{Twisting}
There is a bijective correspondence
 $$\{\varphi :  \coBar(Y)\to M \} 
 \ \ \longleftrightarrow \ \ 
 \{ \psi : Y\to \Bar(M)\}
 \ \ \longleftrightarrow \ \
 \left\{\hat\tau : Y \to \Big(\prod M\Big)^{inv}\right\}$$
 where the $\hat\tau$'s are the twisting cocycles satisfying the above Maurer-Cartan equation.
\end{prop}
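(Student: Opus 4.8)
The plan is to establish the two bijections separately but in parallel, treating the middle object $\{\hat\tau : Y \to (\prod M)^{inv}\}$ as the pivot, since this is essentially what the preceding discussion has already set up. First I would recall that at the level of graded abelian groups (ignoring differentials), the correspondence $\varphi \leftrightarrow \tau$ was obtained by the restriction-and-extension argument above: any natural transformation $\varphi_R : \DC^\vee_\bullet(s^{-1}Y)(R)_{coinv} \to M(R)$ is forced by naturality (via $\bar\beta^*$) to be determined by its restriction to generators $(id_S, b|-, s^{-1}y)$, giving $\tau_S(b,y) = \varphi_S(id_S, b|-, s^{-1}y)$; and conversely $\tau$ determines $\varphi$ by the formula $\varphi_R(\beta, b|d, s^{-1}y) = \beta^* \tau_S(bd, y)$. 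One must check this $\varphi$ is well-defined on coinvariants and genuinely natural, but both are direct consequences of the functoriality relations for $\beta^*$ recorded in the cobar section ($\beta'^*\beta^* = (\beta\beta')^*$) and the sign conventions. Symmetrically, the correspondence $\psi \leftrightarrow \hat\tau$ follows from the dual extension argument using $\bar\alpha_*$, with $\hat\tau_S(y)(e) = s^{-1}\psi_S(y)_{id_S, e|-}$ and $\psi_S(y)_{\alpha, d|e} = s\hat\tau_T(\alpha_* y)(de)$; well-definedness on invariants and naturality come from the analogous functoriality for $\alpha_*$ in the bar section.

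Second, I would show that $\tau$ and $\hat\tau$ encode the same data. This is essentially the content of the appendix's isomorphism $\rho$ between the invariants and coinvariants descriptions: the map $\bigl(\bigoplus_b Y(S)\bigr)_{coinv} \to M(S)$ and the map $Y(S) \to \bigl(\prod_e M(S)\bigr)^{inv}$ are adjoint to each other under the (finite) duality between a sum over enumerations with sign action and a product over enumerations with sign action, so specifying one is the same as specifying the other. Here I would point out that for a fixed tree $S$ the index set of enumerations of its inner edges is finite and the groupoid acting is a "simple groupoid" in the sense of the appendix, so the map $\rho$ applies verbatim.

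Third, and this is where the differentials enter, I would verify that under these dictionaries the condition "$\varphi$ is a chain map" on the cobar side, the condition "$\psi$ is a chain map" on the bar side, and the Maurer-Cartan equation on $\hat\tau$ all coincide. For $\varphi$: by naturality it suffices to impose commutativity of the square with $\partial_M$ on generators $(id_R, b|-, s^{-1}y)$, and expanding $\bar\partial_{ext} + \bar\partial_{int}$ using the explicit coinvariant formulas for the cobar differentials gives exactly $\int_{d:R\mono R'} d^*\tau_{R'}(bd, d_*y) = \partial_M \tau_R(b,y) + (-1)^b \tau_R(b; \partial_Y y)$, after cancelling the suspension signs $s^{-1}$. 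For $\psi$: again by naturality it suffices to test on $(id_S, d|-)$, and expanding $\bar\partial_{ext} + \bar\partial_{int}$ on the invariants side — using $(\bar\partial_{ext}\psi_S(y))_{id_S,d|-} = (-1)^d\int_{f:S\mono S'} f^*(\psi_S(y)_{f, d|f})$ — and rewriting in terms of $\hat\tau$ and $s$ produces the identical equation. So the three sets of data, subject to their respective chain-map / Maurer-Cartan conditions, are in natural bijection, which is the assertion of Proposition~\ref{Twisting}.

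The main obstacle I expect is bookkeeping of the suspension and Koszul signs: one must be careful that the sign $(-1)^b$ coming from $\bar\partial_{int}$ on the cobar side, the sign $(-1)^d$ from $\bar\partial_{ext}$ on the bar side, and the degree shifts built into $s^{-1}Y$ and $sM$ in Notations~\ref{Not:Bar} and \ref{Not:CoBar} all conspire so that the same Maurer-Cartan equation drops out on both ends. The conceptual content is routine once the earlier functoriality lemmas and the appendix's $\rho$ are in hand; the risk is purely in getting a stray sign that would make the two conditions merely equivalent rather than literally equal. I would handle this by doing the $\varphi$-side computation in full and then observing that the $\psi$-side computation is formally dual under $\rho$, so that the equality of the two conditions needs to be checked only once.
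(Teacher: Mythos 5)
Your proposal is correct and follows essentially the same route as the paper: use naturality to reduce $\varphi$ and $\psi$ to their values on identity-indexed generators, identify the resulting $\tau$ and $\hat\tau$ as the two (coinvariant/invariant) forms of one twisting cocycle via the appendix isomorphism $\rho$, and check that compatibility with $\partial_{ext}+\partial_{int}$ on these generators yields the same Maurer--Cartan equation on both sides. The paper in fact writes out both differential computations explicitly rather than deducing one from the other by duality, but this is only a presentational difference.
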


 We now consider the unit and the counit of the adjunction: $\eta : Y \to \Bar(\coBar(Y))$
 and $\varepsilon :  \coBar(\Bar(M)) \to M$. 
 The explicit formulas are the following. First, the counit is given by
 $$\varepsilon_S(\alpha,a|e, s^{-1}\omega)=\alpha^*(s^{-1}(\omega_{id_T, ae|-}))$$ 
 where $\alpha: S\mono T$, $a$ is an enumeration of the inner edges of $S$ and $e$ an enumeration of the inner edges of $T$ not in the image of $\alpha$, and $\omega$ is in $\Bar(M)(T)$. Another way to write $\varepsilon$ using only coinvariants is the following
 $$\varepsilon_S(\alpha, a|e, s^{-1}(\beta, ae|f,sx))=
 \left\{
 \begin{array}{ll}
0 & \text{if } codim(\beta) > 0 \\
\alpha^*\beta^*(x) & \text{if } codim(\beta) = 0
 \end{array}
\right.
 $$
 where $\beta: T \mono R$ and $f$ is an enumeration of the inner edges of $R$ not in $T$.
 
 For the unit,
 $$\eta_S (y)_{\alpha,a|e}=s(id_T, ae|-, s^{-1}\alpha_*y)$$
 where $y$ is in $Y(S)$, and $\alpha$, $a$, $e$ are as above.
 An equivalent description of $\eta$, with the point of view of coinvariants for both bar and cobar, is 
 $$\eta_S(y)= \int_a (id_S,a|-, s(id_S,a|-, s^{-1}y)).$$
 For a second proof of Proposition~\ref{Twisting}, the reader might wish to check that these maps $\eta$ and $\varepsilon$ are natural, compatible with differentials and satisfy the triangle identities.

 \begin{prop}
 The functors $\Bar$ and $\coBar$ are part of an adjunction between $\infty$-operads and $\infty$-cooperads. 
 \end{prop}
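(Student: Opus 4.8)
The plan is to show that the bar--cobar adjunction between presheaves and copresheaves of chain complexes on $\AA$, established in Proposition~\ref{Twisting}, restricts to an adjunction between the categories of $\infty$-operads and of $\infty$-cooperads, where morphisms on each side are taken to respect the structure maps $\theta$, resp. $\Delta$. Two things then have to be verified. First, that $\coBar$ and $\Bar$ actually define functors between these subcategories: on objects this is the content of Proposition~\ref{BarIsCoop} and its dual, and on morphisms it follows at once from the constructions, since the structure map on $\Bar(M)$ is assembled from the structure map $\theta$ of $M$ together with degrafting and fixed sign conventions, and is manifestly natural in $M$; hence a map of $\infty$-operads $M\to M'$, commuting with the $\theta$'s, induces a map commuting with the induced $\Delta$'s, and dually for $\coBar$. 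Second, and this is the heart of the matter, that the natural bijection
$$\{\varphi:\coBar(Y)\to M\}\ \longleftrightarrow\ \{\psi:Y\to\Bar(M)\}$$
of Proposition~\ref{Twisting} carries morphisms of $\infty$-operads on the left to morphisms of $\infty$-cooperads on the right, whenever $Y$ is an $\infty$-cooperad and $M$ an $\infty$-operad.

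To prove this second point I would transport both conditions to the common twisting cocycle $\tau$ (equivalently $\hat\tau$) attached to the pair $(\varphi,\psi)$. Using naturality, it suffices to evaluate $\varphi$ on elements $(\mathrm{id}_S,b\,|\,{-},s^{-1}y)$ and $\psi$ at the component $(\mathrm{id}_S,d\,|\,{-})$, on which $\varphi$ and $\psi$ reduce precisely to $\tau$ and $s\hat\tau$. On the cobar side, the $\infty$-operad structure on $\coBar(Y)$ is built from $\hat\theta=(s^{-1}\otimes s^{-1})\circ\theta\circ s$ and degrafting; inserting the universal element and applying $\varphi_S\otimes\varphi_R$ turns ``$\varphi$ is a morphism of $\infty$-operads'' into an identity expressing $\theta_M\circ\tau_{S\circ_a R}$ in terms of $(\tau_S\otimes\tau_R)$ precomposed with the cooperad structure of $Y$, up to an explicit sign. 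On the bar side, the $\infty$-cooperad structure on $\Bar(M)$ is built from $\tilde\theta=(s\otimes s)\circ\theta\circ s^{-1}$ and degrafting; the same substitution turns ``$\psi$ is a morphism of $\infty$-cooperads'' into exactly the same identity, now written for $\hat\tau$. Thus both conditions cut out the same subset of twisting cocycles, and the bijection restricts. Equivalently, one may verify directly that the unit $\eta:Y\to\Bar(\coBar(Y))$ and the counit $\varepsilon:\coBar(\Bar(M))\to M$, given by the explicit formulas above, are a morphism of $\infty$-cooperads, resp. of $\infty$-operads --- here the killing of all positive-codimension components in the formula for $\varepsilon$ makes only a few terms survive, which one matches against $\theta_M$ composed with the relevant projections.

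Once the second point is in hand, the conclusion is formal: the restricted $\eta$ and $\varepsilon$ are (co)operad morphisms (they correspond under the bijection to identity maps), the triangle identities already hold in $\Ch^{\AA}$ and $\Ch^{\AAop}$ by Proposition~\ref{Twisting} (and the discussion of $\eta$, $\varepsilon$ following it), and the subcategory inclusions are faithful, so the restricted functors together with these (co)units form an adjunction. The main obstacle is the sign bookkeeping inside the second point: as noted in the remark following the cobar construction, the structure maps of $\Bar$ and $\coBar$ involve signs that are not simply Koszul signs, so some care is required to see that the two translations of the compatibility condition yield literally the same equation, and the suspension shifts $s^{\pm1}$ must be tracked consistently through both the $\DC$-degree and the internal degree.
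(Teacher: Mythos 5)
Your proposal is correct, and it overlaps with the paper's proof but organises the main argument differently. The paper proves exactly the four facts you list as your ``equivalently'' option: that $\Bar$ and $\coBar$ carry structure-preserving morphisms to structure-preserving morphisms (asserted to follow directly from the definition of the induced $\Delta$'s), that $\varepsilon$ is a map of $\infty$-operads --- verified by a short explicit computation in the coinvariant notation, applying $\Delta$ of $\coBar\Bar(M)$ to a generator, then $\varepsilon\otimes\varepsilon$, and concluding by naturality of $\theta$ --- and that $\eta$ is a map of $\infty$-cooperads, left as a very similar check. Your primary route instead restricts the hom-set bijection of Proposition~\ref{Twisting} by translating both ``$\varphi$ is a morphism of $\infty$-operads'' and ``$\psi$ is a morphism of $\infty$-cooperads'' into one and the same compatibility equation on the common twisting cocycle $\tau$. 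The two routes are formally equivalent: given functoriality of $\Bar$ and $\coBar$ on the subcategories, the bijection restricts if and only if $\eta$ and $\varepsilon$ are (co)operad maps, since $\psi=\Bar(\varphi)\circ\eta$ and $\varphi=\varepsilon\circ\coBar(\psi)$. Yours is closer to the classical treatment of operadic twisting morphisms in \cite{LV} and makes the restriction of the adjunction conceptually transparent; the paper's is more economical because the explicit formulas for $\eta$ and $\varepsilon$ are already written down, so only one computation (plus its dual) is needed. The one caveat is that the crucial step of your route --- that the two translations yield literally the same equation on $\tau$, including the non-Koszul sign in the cobar structure map and the reduction to components with $\alpha=\mathrm{id}$, which uses naturality of $\theta_M$ and $\Delta_Y$ along the grafting decomposition of extensions from the lemma preceding Proposition~\ref{BarIsCoop} --- is asserted rather than carried out; this is comparable in weight to the computation the paper actually performs for $\varepsilon$, so it would need to be written out (or replaced by your alternative check of $\eta$ and $\varepsilon$, which is precisely the paper's proof).
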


 \begin{proof}
We need to prove the following four facts. First, the bar construction sends morphisms of $\infty$-operads to morphisms of $\infty$-cooperads. Secondly, the cobar construction sends morphims of $\infty$-cooperads to morphisms of $\infty$-operads. Thirdly, $\varepsilon$  is a map of $\infty$-operads. Lastly, $\eta$ is a map of $\infty$-cooperads.

The first two facts follow directly from the definition of the (co)operad structure in the (co)bar contruction.
Let us now prove that $\varepsilon$ is a map of $\infty$-operads. To make the computation more readable, an element of the form $(\alpha, a|e,x)$ will be denoted by $(a|e,x)$ and we let the reader write the signs using the previously given formulas. In the following computation, we consider a grafting $S \circ_u R$ mapped into a tree $T=T_S \circ T_R$, $a$ and $b$ denote respectively enumerations of the inner edges of $S$ and $R$, and $e$ and $f$ denote respectively enumerations of the inner edges of $T_S \setminus S$ and $T_R \setminus R$, and finally $x$ is in $M(T)$. 
$$\begin{array}{rcl}
\Delta_{\bar\coBar} (uab|ef,s^{-1}(uabef|-,sx))&=&
\pm tw(a|e, b|f, \Delta_{\Bar}(s^{-1}(uabef|-,sx)))\\
&=&\pm tw(a|e, b|f, (s^{-1} \otimes s^{-1}) tw (ae|-, bf|-, \tilde\theta(sx)))\\
&=&\pm tw(a|e, b|f, (s^{-1} \otimes s^{-1}) tw (ae|-, bf|-, sx_1 \otimes sx_2))\\
&=& + (a|e, s^{-1} (ae|-,sx_1))  \otimes ( b|f, s^{-1}( bf|-, sx_2)).
\end{array}$$
This is mapped by $\varepsilon \otimes \varepsilon$ to $e^*(x_1) \otimes f^*(x_2)=(ef)^*\theta(x)$ (where $e$ and $f$ denote here the maps $S \mono T_S$ and $R \mono T_R$, and $ef$ the associated map $S\circ R \to T$)
which is nothing but $\theta\varepsilon(uab|ef,s^{-1}(uabef|-,sx))$ by naturality of $\theta$. This concludes the proof for $\varepsilon$.
The proof that $\eta$ is a map of $\infty$-cooperads can be written in a very similar way.

 \end{proof}

\section{Duality theorem}\label{S:Duality}

In the previous sections, we discussed constructions of 
$\Bar(M) : \AA \to \Ch$ from $M:\AAop \to \Ch$, and of
$\coBar(Y) : \AAop \to \Ch$ from $Y:\AA \to \Ch$.
These constructions map linear $\infty$-operads to linear $\infty$-cooperads, 
and vice versa.
The purpose of this section is to prove the following theorem.

\begin{theorem}
 For any two functors $M:\AAop \to \Ch$ and $Y:\AAop \to \Ch$, the unit and counit $\eta : Y \to \Bar \coBar (Y)$ and $\varepsilon : \coBar \Bar (M) \to M$ are quasi-isomorphisms.
\end{theorem}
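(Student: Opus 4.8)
The plan is to prove the theorem by reducing everything to the representable case, using the fact that both $\Bar$ and $\coBar$ commute with the relevant colimits and are exact, so that the general statement follows from a dévissage argument. Concretely, fix the number of leaves $\ell$ and work inside $\AA^{(\ell)}$, which is a finite category with initial object $C_\ell$; here all the complexes involved are concentrated in a bounded range of external degrees (recall $|E(T)| \le \ell - 2$), so there are no convergence issues and finite direct sums suffice. First I would observe that $\coBar$, being built from products over codimension-$q$ extensions, and $\Bar$, being built from direct sums, both preserve quasi-isomorphisms in the variable $M$ (resp.\ $Y$): this is the same argument as Proposition~\ref{propbasicprop}(i), applied levelwise and then passed through the spectral sequence of the total complex, which converges because of the bounded strip. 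Hence it suffices to check that $\eta$ and $\varepsilon$ are quasi-isomorphisms when $M$ (resp.\ $Y$) is concentrated in a single chain degree, i.e.\ is an ordinary presheaf (resp.\ copresheaf) of abelian groups placed in degree $0$; the suspension shifts in Notations~\ref{Not:Bar} and~\ref{Not:CoBar} only relabel degrees and do not affect this reduction.

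Next I would reduce to representables. Every presheaf $M : \AAop \to Ab$ is a (filtered) colimit of finite colimits of representables $\ZZ[\AA(-,R)]$, and since $\coBar\Bar$ preserves filtered colimits (products over the finite category $\AA^{(\ell)}$ commute with filtered colimits of abelian groups) and is exact in the sense of Proposition~\ref{propbasicprop}(iii) — it sends short exact sequences of presheaves to short exact sequences of double complexes — a standard induction on a presentation reduces the statement for $\varepsilon$ to the case $M = \ZZ[\AA(-,R)]$. Dually one reduces the statement for $\eta$ to $Y = \ZZ[\AA(R,-)]$. So the heart of the proof is: for a fixed tree $R$, show $\varepsilon : \coBar\Bar(\ZZ[\AA(-,R)]) \to \ZZ[\AA(-,R)]$ and $\eta : \ZZ[\AA(R,-)] \to \Bar\coBar(\ZZ[\AA(R,-)])$ are quasi-isomorphisms.

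For the representable case I would compute directly. Evaluated at a tree $S$, $\Bar(\ZZ[\AA(-,R)])(S)$ is, by Proposition~\ref{PropRepre}'s method, the complex whose homology with respect to $\partial_{ext}$ is concentrated where $S \mono T \isomap R$, i.e.\ $H_*\Bar(\ZZ[\AA(-,R)])(S) \cong \ZZ[\AA(S,R)]$ up to the degree shift (this is exactly the content of the first unnumbered Proposition in Section~\ref{S:DCold}). Feeding this into $\coBar$ and using the dual proposition for $H_*\DC^\vee$, the double complex $\coBar\Bar(\ZZ[\AA(-,R)])(S)$ has, after taking $\partial_{ext}$-homology in one direction, a single column, and the remaining differential is precisely the bar-type differential of the representable $\AA(S,R)$ again, whose homology by Proposition~\ref{PropRepre} is $\ZZ[\AA(S,R)]$ concentrated in the bottom degree — and one checks that $\varepsilon$ realizes exactly this identification (the explicit ``coinvariants'' formula for $\varepsilon_S$ given above picks out precisely the $codim(\beta)=0$ part). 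A clean way to package this is to exhibit an explicit contracting homotopy on the mapping cone of $\varepsilon_S$ for each $S$, mimicking the homotopy $h$ in the proof of Proposition~\ref{PropRepre} (choose a maximal inner edge in a chosen linear order on $E(R)$ and either adjoin it or kill the term): since everything in sight is a direct sum of such acyclic-or-trivial complexes indexed by $\gamma : S \to R$, the cone is contractible. The argument for $\eta$ is formally dual, using that $\AA^{(\ell)}$ has $C_\ell$ as initial object so that $\AA(R,-)$ behaves like a corepresentable and the same edge-ordering homotopy applies.

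The main obstacle I expect is bookkeeping: getting the three degree shifts (the $p$, the $q = |E(S)|$, and the suspension $s^{\pm 1}$) and the Koszul signs to line up so that the composite $\coBar\Bar$ really is the ``sum of acyclic complexes'' I claimed, and so that $\varepsilon$ is genuinely the projection onto the surviving summand rather than merely a quasi-isomorphism up to unravelling. The spectral sequence comparison is the natural tool to avoid drowning in signs — one filters $\coBar\Bar(M)$ by the $\coBar$-degree, identifies the $E_1$-page with $\coBar$ applied to the functor $H_*\Bar(M) : \AA \to \Ch$, and then, because by the representable computation $H_*\Bar(\ZZ[\AA(-,R)])$ is again (a shift of) a representable, the $E_2$-page collapses onto $M$ itself with $\varepsilon$ inducing the edge map. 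Checking that this edge map agrees with the $\varepsilon$ defined by the explicit formula, and that the collapse is compatible with the decomposition over $\ell$, is the one place where care is genuinely needed; the rest is the dévissage, which is routine given Propositions~\ref{propbasicprop}, \ref{PropRepre} and their $\DC$/$\DC^\vee$ analogues already established above.
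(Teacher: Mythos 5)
Your route (dévissage to representables, then an explicit computation plus a spectral sequence collapse) is genuinely different from the paper's proof, which is much more direct: for a fixed tree $R$ (and a fixed enumeration $a$ of its inner edges) the double complex $\Bar\coBar(Y)(R)$ has rows contracted by the explicit homotopy $h\bigl((e_0,\ldots,e_{p-1}),(d_0,\ldots,d_{q-1}),y\bigr)=\bigl((e_0,\ldots,e_{p-1},d_0),(d_1,\ldots,d_{q-1}),y\bigr)$, so the total complex retracts onto the image of $\eta_R$, and $\varepsilon_R$ is handled by exhibiting an objectwise (non-natural) section to which the same argument applies; no reduction to representables, no exactness or colimit arguments, and it works for arbitrary $Y$ and $M$ in one stroke. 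Your scheme could in principle be made to work, but as written it has two concrete problems. First, the key representable computation is misstated: the unnumbered proposition of Section~\ref{S:DCold} that you cite says $H_p\DC(\ZZ[\AA(-,R)])(S)=\ZZ[Iso(S,R)]$ concentrated at $p=0$ \emph{and only when} $S\simeq R$, not $H_*\Bar(\ZZ[\AA(-,R)])(S)\cong\ZZ[\AA(S,R)]$ (note also the variance: $\Bar(M)$ is covariant in $S$, while $\AA(S,R)$ is contravariant). Consequently your description of the next stage --- ``a single column whose remaining differential is the bar-type differential of $\AA(S,R)$ again'' --- does not match either the correct or the claimed identification. The correct mechanism is different: since $H_*\Bar(\ZZ[\AA(-,R)])$ is supported on trees isomorphic to $R$, the $E_1$-page of your filtration of $\coBar\Bar(\ZZ[\AA(-,R)])(S)$ sits in the single cobar degree $|E(R)|-|E(S)|$, so $d_1$ vanishes for degree reasons and one reads off $\bigl(\prod_{S\mono T\isomap R}\ZZ[Iso(T,R)]\bigr)^{inv}\cong\ZZ[\AA(S,R)]$ in total degree $0$; this still leaves the (real) work of checking that $\varepsilon$ induces exactly this isomorphism, which you acknowledge but do not do.

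Second, the dévissage itself is asserted rather than proved. ``A standard induction on a presentation'' does not work as stated: for $0\to K\to F_0\to M\to 0$ with $F_0$ free, exactness and the five lemma give nothing unless you already control $K$, which is again arbitrary. What you need is a full free resolution $F_\bullet\to M$ together with the facts that $\coBar\Bar$ is exact levelwise, commutes with the relevant (finite) products and direct sums, and preserves quasi-isomorphisms --- the last point being legitimate only because of the bounded external strip $-1\le p\le \ell-3$, which must be invoked explicitly for the comparison spectral sequences to converge. The dual half of the argument is in even rougher shape: the statement you call ``the dual proposition for $H_*\DC^\vee$'' (the homology of $\coBar$ on corepresentables $\ZZ[\AA(R,-)]$) is not in the paper and would have to be proved, and ``formally dual, the same edge-ordering homotopy applies'' is not a proof that $\eta$ is a quasi-isomorphism on corepresentables. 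None of these steps is unfixable, but each is a genuine gap in the argument as written, and collectively they make your proof considerably heavier than the two-line homotopy the theorem actually admits.
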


Before embarking on the proof of the theorem, let us recall some definitions and simplify some notation.
Let us fix an object $R \in \AA$ and work with the representations by coinvariants for both bar and cobar. Elements in $\Bar \coBar (Y)(R)$ are represented in the form
$$(a|e,s(b|d,s^{-1} y))$$
where $R\stackrel{\alpha}{\to} S\stackrel{\beta}{\to} T$, $a$ enumerates the inner edges of $R$, $e$ those of $S$ not in the image of $\alpha$, $b$ those of $S$, and $d$ those of $T$ not in the image of $\beta$, while finally $y \in Y(T)$.
The total degree of such an element is $|y|-d$.
Note that any equivalence class can be represented in the form where $b=ae$, as we will assume from now on.
Now, for the sake of this proof, let us fix $a$ and simplify notation by deleting $a$ and the (de)suspensions from the notation, so write
$$(e,d,y) \text { for } (a|e,s(ae|d,s^{-1} y)).$$
In this notation, the unit $\eta_R$ can be written as
$$\eta_R(y)=(-,-,y)=(a|-,s(a|-, s^{-1} y)).$$

\begin{proof}
 Up to a degreeshift, $\Bar \coBar (Y)(R)$ is a double complex $C_{p,q}$ with elements
 $$(e,d,y)=((e_0, \ldots, e_{p-1}), (d_0, \ldots, d_{q-1}),y)$$
 and with exterior differential $\partial_{ext}$ of bidegree $(-1,1)$ given by
 $$\partial_{ext}(e,d,y)=\sum_{i=0}^{p-1}(-1)^i ((e_0, \ldots, \hat e_i, \ldots e_{p-1}), (e_i,d_0, \ldots, d_{q-1}),y)$$
 and an internal one $\partial_{int}$ of bidegree $(0,1)$ (where we ignore the degree of $y$). We can picture this complex as
 
\begin{equation*}
\xymatrix{
C_{0,0} \\
C_{0,1}\ar[u]^{\partial_{int}} & C_{1,0}\ar[l]^{\partial_{ext}} \\
C_{0,2}\ar[u]^{\partial_{int}}  & C_{1,1}\ar[u]^{\partial_{int}}\ar[l]^{\partial_{ext}}  & C_{2,0}\ar[l]^{\partial_{ext}} \\
\ldots\ar[u]  & \ldots\ar[u]  & \ldots\ar[u]   \\
}
\end{equation*}  
For the external differential, there is a contracting homotopy $h: C_{p,q} \to C_{p+1,q-1}$ (for $p \geq 0$ and $q>0$)
$$h((e_0, \ldots, e_{p-1}), (d_0, \ldots, d_{q-1}),y)=((e_0, \ldots, e_{p-1},d_0), (d_1, \ldots, d_{q-1}),y)$$
for which $h \partial_{ext}= \partial_{ext} h +(-1)^p Id$ for $p>0$ and $\partial_{ext}h=Id$ for $p=0$.
This means that the rows in the double complex are completely acyclic, and hence the inclusion of $C_{0,0}$ into the corresponding total complex is a quasi-isomorphism.
But $C_{0,0}$ is exactly the image of $\eta_R$. This proves that $\eta : Y \to \Bar (\coBar (Y))$ is a quasi-isomorphism.

The counit $\varepsilon : \coBar (\Bar (M)) \to M$ can be written in coinvariant notation  as
 $$\varepsilon_R(b|d, s^{-1}(bd|f,sx))=
 \left\{
 \begin{array}{ll}
0 & \text{if } codim(f) > 0 \\
d^*f^*(x) & \text{if } codim(f) = 0
 \end{array}
\right.
 $$
 where $R\to S \to T, b,d,f$ enumerate inner edges of $R$, $S \setminus R$, $T \setminus S$, and $x\in M(T)$, and $d,f$ also denote the morphisms $R\to S$ and $S \to T$.
 For fixed $R$ and $b$, this map $\varepsilon_R$ has a non-natural section $\sigma_R$ defined by 
 $$\sigma_R(x)=(b|-, s^{-1}(b|-,sx))$$
 to which we can apply exactly the same argument as for $\eta$. This shows that for a fixed $R$, $\sigma_R$ and thus $\varepsilon_R$ are quasi-isomorphisms.
\end{proof}

%

\section*{Appendix: Invariants and coinvariants}

The purpose of this appendix is to explain the notation related to
the isomorphism between invariants and coinvariants used for the alternative descriptions of the bar and cobar constructions. 

\bigskip

Let $G$ be a groupoid acting on a family $A= \{A_x ; x \in Ob(G)\}$ in $\Ch$.

There is a canonical map

$$
\left( \bigoplus_{x \in G} A_x \right)_{coinv} 
\stackrel{\rho}{\longrightarrow}
\left( \prod_{x \in G} a_x \right)^{inv}
$$
Writing elements of the group of coinvariants as $[x,a]$ where $x$ is an object of $G$ and $a$ is in $A_x$, the map $\rho$ is defined for $y$ an object of $G$ by $\rho([x,a])_y =0$ if there is no arrow from $x$ to $y$ and by $g \cdot a$ for $g$ a morphism from $x$ to $y$.

Call a groupoid \textit{simple} if there is at most one morphism between any two objects
(ie. $G$ is an ``equivalence relation'').
Since the groups of invariants and of coinvariants are both invariant under categorical equivalence of groupoids, the following proposition is obvious.
In fact, it is enough to ask that $G$ has finitely many connected components.

\begin{prop*}
For a finite simple groupoid $G$, the map $\rho$ is an isomorphism. 
\end{prop*}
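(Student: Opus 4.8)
The plan is to reduce everything to the case of a discrete groupoid. The first observation is that in a simple groupoid $G$ every automorphism group is trivial, since the unique arrow $x \to x$ must be $\mathrm{id}_x$. Hence each connected component of $G$ is equivalent, as a groupoid, to the terminal groupoid, and $G$ itself is equivalent to the discrete groupoid on its set $\pi_0(G)$ of connected components, which is finite. As the paper already records, both $A \mapsto (\bigoplus_x A_x)_{coinv}$ and $A \mapsto (\prod_x A_x)^{inv}$ are invariant under equivalences of groupoids, and $\rho$ is natural with respect to such equivalences, so one is reduced to the case where $G$ is discrete on a finite set — where the invariants are the product, the coinvariants are the direct sum, these agree, and $\rho$ is the identity.

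The concrete form I would actually write down avoids invoking equivalence-invariance and argues directly. Pick a representative object $x_i$ in each of the finitely many connected components $i = 1, \dots, n$. On the coinvariant side, $a \mapsto [x_i, a]$ defines an isomorphism $\bigoplus_{i=1}^n A_{x_i} \isomap (\bigoplus_x A_x)_{coinv}$: it is surjective because any $[x,a]$ with $x$ in component $i$ equals $[x_i, g \cdot a]$ for the unique $g \colon x \to x_i$, and injective because the defining relations of the coinvariants only identify elements lying over one and the same component. On the invariant side, evaluation at the $x_i$ gives an isomorphism $(\prod_x A_x)^{inv} \isomap \prod_{i=1}^n A_{x_i}$; its inverse sends $(b_i)_i$ to the family $a_x := g_x \cdot b_i$, where $x$ lies in component $i$ and $g_x \colon x_i \to x$ is the unique arrow, and this family is invariant by the identity $g_y = h \circ g_x$ for the unique $h \colon x \to y$.

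Finally I would trace $\rho$ through these identifications: since $\rho([x_i,a])_{x_j}$ equals $a$ when $i = j$ (the connecting arrow being $\mathrm{id}_{x_i}$) and $0$ when $i \neq j$, the map $\rho$ becomes exactly the canonical inclusion $\bigoplus_{i=1}^n A_{x_i} \hookrightarrow \prod_{i=1}^n A_{x_i}$, which is an isomorphism precisely because there are finitely many components — this is where the finiteness hypothesis (or, more weakly, finiteness of $\pi_0(G)$) enters. There is no genuine obstacle here; the only points needing a little care are the well-definedness of the inverse map on the invariant side and the verification that $\rho$ respects the component-wise decompositions, both of which are immediate from the "at most one morphism" hypothesis.
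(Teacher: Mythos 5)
Your proposal is correct and follows essentially the same route as the paper: the paper's entire argument is the observation that invariants and coinvariants are invariant under equivalence of groupoids, so a finite simple groupoid can be replaced by a finite discrete one, where $\rho$ is visibly the identity of a finite direct sum/product. Your second, explicit argument (choosing one object per component and identifying $\rho$ with the inclusion of a finite direct sum into the corresponding finite product) is just a spelled-out version of what the paper declares obvious, and it correctly isolates finiteness of $\pi_0(G)$ as the only place the hypothesis is used, matching the paper's remark.
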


This applies to the groupoids $H$ used in the definitions of the bar and cobar constructions. 

As said, we need to be more explicit about the inverse of $\rho$, denoted
$$\int_{x \in G} (-) : 
\left( \prod_{x \in G} a_x \right)^{inv}
\to
\left( \bigoplus_{x \in G} A_x \right)_{coinv} 
$$
and we write
$$\int_{x \in G} \omega = \sum_C \left( \frac{1}{|C|} \sum_{x \in C} [x,\omega(x)] \right) 
$$
(and sometimes simply write $\omega(x)$ for $[x,\omega(x)]$ in this expression).
Here $C$ ranges over the connected components of $G$, and $|C|$ is the number of objects in $C$.    
It looks as if we use characteristic zero in this formula, but it is not really.
The expression $\displaystyle{\frac{1}{|C|} \sum_{x \in C} \omega(x)}$ exists as an element of $\left( \bigoplus_{x \in G} A_x \right)_{coinv} $,
and is represented by $(x, \omega(x))$ for any choice of $x \in C$.

This expression allows us to calculate without making a choice.
For example, if we have a map $f : \displaystyle{ \bigg( \bigoplus_{x \in G} A_x \bigg)_{coinv} \longrightarrow B}$  and $\omega \in \displaystyle{ \bigg( \prod_{x \in G} A_x \bigg)^{inv} }$, 
the formula $\int_G f = \displaystyle{\sum_C \frac{1}{|C|} \sum_{x \in C} f \omega(x)}$ now literally makes sense because $f\omega(x)$ is constant on connected components, and one checks directly that $f=\int_G f \rho$.

%

\bibliographystyle{plain}

\bibliography{biblio}

\begin{thebibliography}{10}

\bibitem{BN}
Matija {Ba\v{s}i\'c} and Thomas {Nikolaus}.
\newblock {Dendroidal sets as models for connective spectra}.
\newblock {\em {J. \(K\)-Theory}}, 14(3):387--421, 2014.

\bibitem{C}
Michael Ching.
\newblock Bar constructions for topological operads and the {G}oodwillie
  derivatives of the identity.
\newblock {\em Geom. Topol.}, 9, 2005.

\bibitem{C2}
Michael {Ching}.
\newblock {Bar-cobar duality for operads in stable homotopy theory}.
\newblock {\em {J. Topol.}}, 5(1):39--80, 2012.

\bibitem{CM}
D.-C. Cisinski and I.~Moerdijk.
\newblock Dendroidal sets and simplicial operads.
\newblock {\em J. Topol.}, 6(3):705--756, 2013.

\bibitem{F}
B.~Fresse.
\newblock Koszul duality of operads and homology of partition posets.
\newblock {\em in "Homotopy theory and its applications (Evanston, 2002)",
  Contemp. Math.}, 346:115--215, 2004.

\bibitem{FG}
B.~Fresse and L.~Guerra.
\newblock {On a notion of homotopy Segal $E_\infty$-Hopf cooperad}.
\newblock {\em \texttt{https://arxiv.org/abs/2011.11333}}, 2020.

\bibitem{GJ}
E.~Getzler and J.~D.~S. Jones.
\newblock {Operads, homotopy algebra and iterated integrals for double loop
  spaces}.
\newblock {\em \texttt{hep-th/9403055}}, 1994.

\bibitem{GK}
V.~Ginzburg and M.~Kapranov.
\newblock Koszul duality for operads.
\newblock {\em Duke Math. J.}, 76(1):203--272, 1994.

\bibitem{HM}
G.~Heuts and I.~Moerdijk.
\newblock {\em Trees in algebra and topology}.
\newblock 2021.

\bibitem{KS}
Maxim Kontsevich and Yan Soibelman.
\newblock Deformations of algebras over operads and the {D}eligne conjecture.
\newblock In {\em Conf\'erence {M}osh\'e {F}lato 1999, {V}ol. {I} ({D}ijon)},
  volume~21 of {\em Math. Phys. Stud.}, pages 255--307. Kluwer Acad. Publ.,
  Dordrecht, 2000.

\bibitem{LV}
Jean-Louis Loday and Bruno Vallette.
\newblock {\em Algebraic operads}, volume 346 of {\em Grundlehren der
  Mathematischen Wissenschaften [Fundamental Principles of Mathematical
  Sciences]}.
\newblock Springer-Verlag, Berlin, 2012.

\bibitem{MW1}
I.~Moerdijk and I.~Weiss.
\newblock Dendroidal sets.
\newblock {\em Algebr. Geom. Topol.}, 7:1441--1470, 2007.

\bibitem{N}
Thomas {Nikolaus}.
\newblock {Algebraic \(K\)-theory of \(\infty\)-operads}.
\newblock {\em {J. \(K\)-Theory}}, 14(3):614--641, 2014.

\bibitem{RW}
Alan Robinson and Sarah Whitehouse.
\newblock The tree representation of {$\Sigma_{n+1}$}.
\newblock {\em J. Pure Appl. Algebra}, 111(1-3):245--253, 1996.

\bibitem{V}
Bruno Vallette.
\newblock Homology of generalized partition posets.
\newblock {\em J. Pure Appl. Algebra}, 208(2):699--725, 2007.

\end{thebibliography}

\end{document}